\title[Forking and stability in a $C^*$-algebra representation]{Forking and stability in the representations of a $C^*$-algebra}
\author{Camilo Argoty}
\address{Camilo Argoty
\\ Departamento de Matem\'aticas
\\ Universidad Sergio Arboleda
	\\ Bogot\'a, Colombia}
\date{}
\thanks{The author is very thankful to Alexander Berenstein for his help in reading and correcting this work. In the same way, the author wants to thank Ita\"i Ben Yaacov for discussing ideas in particular cases. Finally, the author wants to thank Andr\'es Villaveces for his interest and for sharing ideas on how to extend the results of this paper.}
\def\newrefformat#1#2{%
  \@namedef{pr@#1}##1{#2}}
\def\prettyref#1{\@prettyref#1:}
\def\@prettyref#1:#2:{%
  \expandafter\ifx\csname pr@#1\endcsname\relax%
    \PackageWarning{prettyref}{Reference format #1\space undefined}%
    \ref{#1:#2}%
  \else%
    \csname pr@#1\endcsname{#1:#2}%
  \fi%
}
\def\indsym#1#2{%
  \setbox0=\hbox{$\m@th#1x$}%
  \kern\wd0%
  \hbox to 0pt{\hss$\m@th#1\mid$\hbox to 0pt{$\m@th#1^{#2}$}\hss}%
  \lower.9\ht0\hbox to 0pt{\hss$\m@th#1\smile$\hss}%
  \kern\wd0} \newcommand{\ind}[1][]{\mathop{\mathpalette\indsym{#1}}}
\def\nindsym#1#2{%
  \setbox0=\hbox{$\m@th#1x$}%
  \kern\wd0%
  \hbox to 0pt{\mathchardef\nn="3236\hss$\m@th#1\nn$\kern1.4\wd0\hss}
  \hbox to 0pt{\hss$\m@th#1\mid$\hbox to 0pt{$\m@th#1^{#2}$}\hss}%
  \lower.9\ht0\hbox to 0pt{\hss$\m@th#1\smile$\hss}%
  \kern\wd0}
\newcommand{\nind}[1][]{\mathop{\mathpalette\nindsym{#1}}}
 \def\bv{\bar v}  
 \def\bw{\bar w}
\def\p{\pi}\def\f{\phi}
\def\ben{\begin{enumerate}}\def\een{\end{enumerate}}
\def\bdc{\begin{description}}\def\edc{\end{description}}
\def\bitm{\begin{itemize}}\def\eitm{\end{itemize}}
\def\bdf{\begin{defin}}\def\edf{\end{defin}}
\def\bth{\begin{theo}}\def\eth{\end{theo}}
\def\bfc{\begin{fact}}\def\efc{\end{fact}}
\def\bco{\begin{coro}}\def\eco{\end{coro}}
\def\brm{\begin{rem}}\def\erm{\end{rem}}
\def\blm{\begin{lemma}}\def\elm{\end{lemma}}
\def\bnt{\begin{nota}}\def\ent{\end{nota}}
\def\bex{\begin{exe}}\def\eex{\end{exe}}
\def\bpf{\begin{proof}}\def\epf{\end{proof}}
\def\bas{\begin{assum}}\def\eas{\end{assum}}
\def\beq{\begin{equation}}\def\eeq{\end{equation}}
\def\becl{\begin{cla}}\def\ecl{\end{cla}}
 \def\r{\rho}
 \def\k{\kappa}  
  \def\g{\gamma} \def\e{\epsilon} \def\l{\lambda}  
   \def\f{\phi}
\def\A{\mathcal{A}}
\def\O+{\oplus}
\newtheorem{theo}{Theorem}[section]
\newtheorem{coro}[theo]{Corollary}
\newtheorem{lemma}[theo]{Lemma}
\theoremstyle{definition}
\newtheorem{defin}[theo]{Definition}
\newtheorem{fact}[theo]{Fact}
\theoremstyle{remark}
\newtheorem{exe}[theo]{Example}
\newtheorem{rem}[theo]{Remark}
\newtheorem{assum}[theo]{Assumption}
\newtheorem{nota}[theo]{Notation}
\newtheorem*{cla}{Claim}
\newcommand{\N}{\ensuremath{\mathbb{N}}}
\newcommand{\C}{\ensuremath{\mathbb{C}}}
\newcommand{\MM}{\ensuremath{\mathcal{M}}}
\newcommand{\Aut}{\ensuremath{\text{Aut}}}
\newcommand{\sfrac}[2]{\hbox{$\frac{#1}{#2}$}}
\newcommand{\half}[1][1]{\sfrac{#1}{2}}
\begin{document}

\begin{abstract}
We show that the theory of a non-degenerate representation of a $C^*$-algebra $\A$ over a Hilbert space $H$ is superstable. Also, we characterize forking, orthogonality and domination of types and show that the theory has weak elimination of imaginaries.
%\keywords{$C^*$-algebras, representation theory, stability}
\end{abstract}

\maketitle

\section{introduction}

Let $\A$ be a unital $C^*$-algebra and let $\p:\A\to B(H)$ be a $C^*$-algebra nondegenerate isometric homomorphism, where $B(H)$ is the algebra of bounded operators over a Hilbert space $H$. We continue our study of $H$ as a metric structure expanded by $\A$ from the point of view of continuous logic. The basic model theoretic facts of the structure were proved in \cite{Ar} and in this paper we deal with stability theoretic properties. Recall that we include a symbol $\dot{a}$ in the language of the Hilbert space structure whose interpretation in $H$ will be $\p(a)$ for every $a$ in the unit ball of $\A$ getting the following metric structure of only one sort: 
\[(Ball_1(H),0,-,i,\half[x+y],\|\cdot\|,(\p(a))_{a\in Ball_1(\A)})\] where $Ball_1(H)$  and $Ball_1(\A)$ are the corresponding unit balls in $H$ and $\A$ respectively; $0$ is the zero vector in $H$; $-: Ball_1(H)\to Ball_1(H)$ is the function that to any vector $v\in Ball_1(H)$ assigns the vector $-v$; $i: Ball_1(H)\to Ball_1(H)$ is the function that to any vector $v\in Ball_1(H)$ assigns the vector $iv$ where $i^2=-1$; $\half[x+y]: Ball_1(H)\times Ball_1(H)\to Ball_1(H)$ is the  function that to a couple of vectors $v$, $w\in Ball_1(H)$ assigns the vector $\half[v+w]$; $\|\cdot\|: Ball_1(H) \to [0, 1]$ is the norm function; $\A$ is an unital $C^*$-algebra; $\p:\A\to B(H)$ is a $C^*$-algebra isometric homomorphism. The metric is given by $d(v,w) =\|\half[v-w]\|$. Briefly, the structure will be refered to as $(H,\p)$. 

As we said before, this is a continuation of a previous work (see \cite{Ar}). The main results obtained there are the following:
\ben
\item The theory of $(H,\p)$ has quantifier elimination. Furthermore, for $v,w\in H$, $tp(v/\emptyset)=tp(w/\emptyset)$ if and only if $\f_v=\f_w$. Here, $\f_v:\A\to\C$ is the positive linear functional defined by the formula $\f_v(a)=\langle av\ |\ v\rangle$.
\item An explicit description of the model companion of $Th(H,\p)$.
\een

In this paper we get the following results

\ben
\item Characterize non-forking in $Th(H,\p)$.
\item $Th(H,\p)$ is superstable.
\item Let $\bv\in H^n$ and $E\subseteq H$. Then the type $tp(\bv/E)$ has a canonical base formed by a tuple of elements in $H$ and therefore, the theory of $(H,\p)$ has weak elimination of imaginaries.
\item Let $E\subseteq H$, $p,q\in S_1(E)$ be stationary and $v$, $w\in H$ be such that $v\models p$ and $w\models q$. Then, $p\perp_E q$ if and only if $\f_{P^\perp_{acl(E)}(v_e)}\perp \f_{P^\perp_{acl(E)}(w_e)}$, where the orthogonality of functionals is defined in Definition \ref{relationsbetweenpositivelinearfunctionals}.
\een

Let us recall previous work around this topic. In \cite{Io}, Henson and Iovino,  observed that the theory of a Hilbert space expanded with a family of bounded operators is stable. A geometric characterization of forking in Hilbert spaces expanded with normal commuting operators, was first done by Berenstein and Buechler \cite{BeBue}. In \cite{BUZ} Ben Yaacov, Usvyatsov and Zadka characterized the unitary operators corresponding to generic automorphisms of a Hilbert space as those unitary transformations whose spectrum is $S^1$ and gave the key ideas used in this paper to characterize domination and orthogonality of types. The author and Ben Yaacov (\cite{ArBen}) studied the more general case of a Hilbert space expanded by a normal operator $N$. The author and Berenstein (\cite{ArBer}) studied the theory of the structure $(H,+,0,\langle|\rangle,U)$ where $U$ is a unitary operator in the case where the spectrum is countable and characterized prime models and orthogonality of types.  Most results in this paper are generalizations of results present in \cite{ArBer,ArBen}.   

%A work related to this one is the one of Farah, Hart and Sherman who recently have showed that the theory of a $C^*$-algebra is not stable (See \cite{FaHaShe1} and \cite{FaHaShe2}). These papers and Farah's work point out one phenomenon: $C^*$-algebras have complicated model theoretical structure but their representations are very well behaved. This is similar to the case of the integers $\Z$: The theory $Th(\Z)$ is quite difficult from the model theoretic point of view, but some of its representations like torsion free abelian groups are very well behaved.

This paper is divided as follows: In section \ref{SectionDefinableAndAlgebraicClosures}, we characterize definable and algebraic closures. In Section \ref{Forking}, we give a geometric interpretation of forking and show weak elimination of imaginaries. Finally, in Section \ref{OrthogonalityAndDominationOfTypes}, we characterize orthogonality and domination of types.

\section{definable and algebraic closures}\label{SectionDefinableAndAlgebraicClosures}
In this section we give a characterization of definable and algebraic closures. Gelfand-Naimark-Segal construction is a tool for understanding definable closures (see Theorem 3.11 in \cite{Ar}). %Algebraic closures are studied with the help of Theorem \ref{compactrepresentations2}.

Recall the following conventions:
\begin{nota}\label{DefinitionHv}
Given $E\subseteq H$ and $v\in H$, we denote by:
\bitm 
\item $H_E$, the Hilbert subspace of $H$ generated by the elements $\p(a)v$, where $v\in E$ and $a\in\A$.
\item $\p_E:=\{\p(a)\upharpoonright H_E\ |\ a\in \A\}$.
\item $(H_E,\p_E)$, the subrepresentation of $(H,\p)$ generated by $E$.
\item $H_v$, the space $H_E$ when $E=\{v\}$ for some vector $v\in H$.
\item $\p_v:=\p_E$ when $E=\{v\}$.
\item $(H_v,\p_v)$, the subrepresentation of $(H,\p)$ generated by $v$.
\item $(H,\p,v)$, means that $\{\p(a)v\ |\ a\in\A\}$ is dense in $(H,\p)$.
\item $H_E^\perp$, the orthogonal complement of $H_E$.
\item $P_E$, the projection over $H_E$.
\item $P_{E^\perp}$, the projection over $H_E^\perp$.
\eitm
\ent

\bdf
Given a representation $\p:\A\to B(H)$, we define:
\bitm
\item The \textit{essential part} of $\p$ is the $C^*$-algebra homomorphism, \[\p_e:=\r\circ\p:\A\to B(H)/\mathcal{K}(H)\] of $\p(\A)$, where $\r$ is the canonical proyection of $B(H)$ onto the Calkin Algebra $B(H)/\mathcal{K}(H)$.
\item The \textit{discrete part} of $\p$ is the restriction,
\begin{align*}
\p_d:ker(\p_e)&\to \mathcal{K}(H)\\
a &\to \p(a)
\end{align*}
\item The \textit{discrete part} of $\p(\A)$ is defined in the following way:
\[\p(\A)_d:=\p(\A)\cap \mathcal{K}(H).\]
\item The \textit{essential part} of $\p(\A)$ is the image $\p(\A)_e$ of $\p(\A)$ in the Calkin Algebra. 
\item The \textit{essential part} of $H$ is defined in the following way:
\[H_e:=\text{ker}(\p(\A)_d)\]
\item The \textit{discrete part} of $H$ is defined in the following way:
\[H_d:=\text{ker}(\p(\A)_d)^\perp\]
\item The \textit{essential part} of a vector $v\in H$ is the projection $v_e$ of $v$ over $H_e$.
\item The \textit{discrete part} of a vector $v\in H$ is the projection $v_d$ of $v$ over $H_d$.
\item The \textit{essential part} of a set $E\subseteq H$ is the set
\[E_e:=\{v_e\ |\ v\in E\}\]
\item The \textit{discrete part} of a set $G\subseteq H$ is the set
\[E_d:=\{v_d\ |\ v\in G\}\]
\eitm
\end{defin}

\begin{fact}[Lemma 4.5 in \cite{Ar}]\label{Aut(H/E)}
Let $E\subseteq H$, $U\in \Aut(H,\p)$. Then $U\in \Aut((H,\p)/E)$ if and only if:
\bitm
\item For all $v\in H$ and all $a\in\A$, $U(\p(a)(v))=\p(a)(Uv)$.
\item For all $w\in H_E$, $Uw=w$.
\eitm
\end{fact}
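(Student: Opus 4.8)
The plan is to let the standing hypothesis $U\in\Aut(H,\p)$ carry most of the burden and then isolate the genuinely new content. First I would record that an automorphism of the structure preserves the interpretations of $0$, $-$, $i$, $\half[x+y]$ and $\|\cdot\|$, so that $U$ is a $\C$-linear isometry of $H$, in particular a continuous operator; it also preserves every function symbol $\p(a)$ with $a\in Ball_1(\A)$, so $U(\p(a)(v))=\p(a)(Uv)$ for all $v\in H$ and all $a\in Ball_1(\A)$, and scaling in the $\A$-variable upgrades this to every $a\in\A$. Thus the first displayed condition holds automatically for any $U\in\Aut(H,\p)$, and the statement reduces to the equivalence that $U$ fixes $E$ pointwise if and only if $U$ fixes all of $H_E$ pointwise.

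The implication from the second condition to membership in $\Aut((H,\p)/E)$ is then immediate: since $\A$ is unital, $v=\p(1)v$ exhibits every $v\in E$ as one of the generators of $H_E$, so $E\subseteq H_E$; hence if $U$ fixes $H_E$ pointwise it fixes $E$ pointwise, and as $U$ is already an automorphism of $(H,\p)$ this says exactly that $U\in\Aut((H,\p)/E)$.

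For the reverse implication I would argue by density. Assuming $U\in\Aut((H,\p)/E)$, so that $Uv=v$ for all $v\in E$, I combine this with the commutation relation already established: for each $v\in E$ and each $a\in\A$ we get $U(\p(a)v)=\p(a)(Uv)=\p(a)v$. Hence $U$ fixes every element of the generating family $\{\p(a)v : v\in E,\ a\in\A\}$. By $\C$-linearity $U$ fixes the linear span of this family, and by continuity (coming from $U$ being an isometry) $U$ fixes the closure of that span, which is precisely $H_E$ as defined in Notation~\ref{DefinitionHv}. Therefore $Uw=w$ for all $w\in H_E$, which is the second condition.

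The one step with any content is this last passage from the generating family to the whole subspace $H_E$, and the point to be careful about is that it uses two features of $U$ simultaneously: its $\C$-linearity (to reach the span) and its continuity (to reach the closure), both of which are consequences of $U$ being an automorphism of the metric structure. Everything else is either forced by the definition of a structure automorphism or follows from the unitality of $\A$ via $E\subseteq H_E$.
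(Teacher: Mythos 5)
Your proof is correct. Note that the paper does not prove this statement at all --- it imports it verbatim as Lemma 4.5 of \cite{Ar} --- so there is nothing to compare against; your argument (the first condition is automatic for any automorphism of the metric structure since the $\p(a)$ are function symbols, $E\subseteq H_E$ by unitality gives one direction, and linearity plus continuity of the isometry $U$ pushes pointwise fixing from the generating family $\{\p(a)v : v\in E,\ a\in\A\}$ to its closed span $H_E$ for the other) is the standard and complete one.
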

	
\begin{theo}\label{definclosure}
Let $E\subseteq H$. Then $dcl(E)=H_E$
\end{theo}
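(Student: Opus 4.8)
The plan is to combine the characterization of $\Aut((H,\p)/E)$ in Fact \ref{Aut(H/E)} with the standard fact of continuous logic that, inside a monster model of $Th(H,\p)$, $dcl(E)$ is exactly the set of points fixed by every automorphism fixing $E$ pointwise. So I work throughout inside such a monster model. One inclusion is then immediate: by the second clause of Fact \ref{Aut(H/E)}, every $U\in\Aut((H,\p)/E)$ fixes $H_E$ pointwise, so every vector of $H_E$ is fixed by all such $U$ and therefore lies in $dcl(E)$. Hence $H_E\subseteq dcl(E)$.

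For the reverse inclusion I would argue by contrapositive. Since $H_E$ is a $\p(\A)$-invariant closed subspace (it is generated by the vectors $\p(a)v$ and $\A$ is a $*$-algebra), its orthogonal complement $H_E^\perp$ is also $\p(\A)$-invariant. Suppose $v\notin H_E$ and set $u:=P_{E^\perp}(v)\neq 0$. It suffices to produce $U\in\Aut((H,\p)/E)$ with $Uv\neq v$. Since every such $U$ fixes $P_E(v)\in H_E$, this reduces to producing a unitary $V$ on $H_E^\perp$ that commutes with $\p(\A)\!\upharpoonright\! H_E^\perp$ and satisfies $Vu\neq u$; then $U:=\mathrm{Id}_{H_E}\oplus V$ commutes with $\p(\A)$ and fixes $H_E$, hence lies in $\Aut((H,\p)/E)$ by Fact \ref{Aut(H/E)}, and $Uv=P_E(v)+Vu\neq v$.

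To build $V$ I would use saturation of the monster together with the type/functional dictionary from \cite{Ar}. Let $H_u$ be the cyclic subrepresentation of $H_E^\perp$ generated by $u$. The partial type asserting that a vector has functional $\f_u$ and is orthogonal to $H_E\cup H_u$ is finitely satisfiable, so by saturation it is realized by some $u'\in H_E^\perp$ with $\f_{u'}=\f_u$ and $u'\perp H_u$. By quantifier elimination the equality $\f_{u'}=\f_u$ forces $tp(u')=tp(u)$, i.e. the GNS correspondence yields an intertwiner $W\colon H_u\to H_{u'}$ with $Wu=u'$. Defining $V$ to swap the two orthogonal copies $H_u$ and $H_{u'}$ via $W$ and $W^{-1}$, and to be the identity on $(H_u\oplus H_{u'})^\perp$ inside $H_E^\perp$, gives a unitary in the commutant with $Vu=u'\neq u$, as required. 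This shows $v\notin dcl(E)$ and completes the argument.

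The main obstacle is this middle construction: one must confirm that a \emph{nonzero} vector of $H_E^\perp$ can always be displaced by the commutant, which is precisely the statement that the ambient representation carries enough multiplicity. This is where saturation is essential, since it produces the orthogonal copy $u'$ with the prescribed functional, and where the type/functional correspondence of \cite{Ar} is needed to upgrade \textquotedblleft same functional\textquotedblright\ into an honest intertwiner $W$ and hence into an automorphism. Once $u'$ and $W$ are in hand, verifying that $V$ extended by the identity on $H_E$ is unitary and commutes with all of $\p(\A)$ is routine from the $\p(\A)$-invariance of the summands.
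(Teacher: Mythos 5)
Your first inclusion is fine, but the reverse inclusion contains a genuine gap: the partial type you use to produce the orthogonal copy $u'$ is not, in general, consistent. You assert that the conditions ``$\f_{u'}=\f_u$ and $u'\perp H_E\cup H_u$'' are finitely satisfiable, but this fails whenever the cyclic subrepresentation generated by $u=P_{E^\perp}(v)$ meets the discrete part $H_d$. Concretely, take $\A=M_2(\C)$ acting identically on $H=\C^2$ and $E=\emptyset$: then $H_u=\C^2$ for every $u\neq 0$, the whole model is algebraic over $\emptyset$ (Fact \ref{Hdsubsetalgebraicclosureofemptyset}), and no elementary extension contains a nonzero vector orthogonal to $H_u$ with the same functional --- the multiplicities of the compact part are determined by the theory, so saturation cannot manufacture new orthogonal copies. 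Corollary \ref{CoroElementaryEquivalent} only licenses adding copies of the \emph{essential} part $(H_e,\p_e)$. Hence your swap automorphism exists only when $(P_{E^\perp}v)_e\neq 0$, and the argument as written can only place $dcl(E)$ inside the span of $H_E$ and $H_d$, i.e.\ inside $acl(E)$, which is strictly weaker than the statement.

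The repair is far simpler than the machinery you set up, and it is what the paper does: you do not need any multiplicity to displace a nonzero vector of $H_E^\perp$ by the commutant, because unimodular scalars already lie in the commutant. Pick $\l\in\C$ with $|\l|=1$ and $\l\neq 1$, and set $U:=Id_{H_E}\oplus\l\, Id_{H_E^\perp}$. Since $H_E$ and $H_E^\perp$ are both $\p(\A)$-invariant (as you correctly note, using that $\p(\A)$ is a $*$-algebra), $U$ is a unitary commuting with every $\p(a)$ and fixing $H_E$ pointwise, hence $U\in\Aut((H,\p)/E)$ by Fact \ref{Aut(H/E)}; and $Uv=P_Ev+\l P_{E^\perp}v\neq v$ precisely because $P_{E^\perp}v\neq 0$. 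Your reduction to moving $u=P_{E^\perp}(v)$ by a commutant unitary fixing $H_E$ is correct; only the construction of that unitary needs to be replaced, and your closing remark that ``the ambient representation carries enough multiplicity'' is exactly the point that is false in general and, fortunately, not needed.
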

\begin{proof}
From Fact \ref{Aut(H/E)}, it is clear that $H_E\subseteq dcl(E)$. On the other hand, if $v\not\in H_E$, let $\l\in \C$ such that $\l\neq 1$ and $|\l|=1$. Then, the operator $U:=Id_{H_E}\oplus \l Id_{H_E^\perp}$ is an automorphism of $(H,\p)$ fixing $E$ such that $Uv\neq v$.
\end{proof}

\bco \label{CoroElementaryEquivalent}
For any index set $I$, $(H,\p)\equiv (H,\p)\oplus(H_e,\p_e)$.
\eco
\bpf
By Fact \ref{HensonsLemma}.
\epf

\blm\label{Henotalgebraicoveremptyset}
Let $v\in H_e$. Then $v$ is not algebraic over $\emptyset$. 
\elm
\bpf
Let $\k>2^{\aleph_0}$ and consider $(H,\p)\oplus\bigoplus_{i\in I}(H_e,\p_e)$. By Corollary \ref{CoroElementaryEquivalent}. Then there are $\k$ vectors $v_i$ for $i<\k$ such that every $v_i$ has the same type over $\emptyset$ as $v$. This means that the orbit of $v$ under the automorphisms of $(H,\p)$ is unbounded and therefore $v$ is not algebraic over the emptyset.
\epf

Recall Fact 3.19 in \cite{Ar}:
\bfc[Fact 3.19 in \cite{Ar}]\label{Hdsubsetalgebraicclosureofemptyset}
Let $v\in H_d$. Then $v$ is algebraic over $\emptyset$.
\efc

\blm\label{LemmaIfVeNotZeroVNotAlgebraic}
Let $v\in H$ such that $v_e\neq 0$. Then $v$ is not algebraic over $\emptyset$.
\elm
\bpf
Clear from previous Lemma \ref{Henotalgebraicoveremptyset}.
\epf

Now, we describe the algebraic closure of $\emptyset$:
\bth\label{aclemptyset}
$acl(\emptyset)=H_d$
\eth
\bpf
By Lemma \ref{Hdsubsetalgebraicclosureofemptyset}, $H_d\subseteq acl(\emptyset)$ and, by Lemma \ref{LemmaIfVeNotZeroVNotAlgebraic}, $acl(\emptyset)\subseteq H_d$.
\epf

\bth\label{aclE}
Let $E\subseteq H$. Then $acl(E)$ is the Hilbert subspace of $H$ generated by $dcl(E)$ and $acl(\emptyset)$.
\eth
\bpf
Let $G$ be the Hilbert subspace of $H$ generated by $dcl(E)$ and $acl(\emptyset)$. It is clear that $G\subseteq acl(E)$. Let $v\in acl(E)$. By Lemma \ref{Hdsubsetalgebraicclosureofemptyset}, $v_d\in acl(\emptyset)$, and by Theorem \ref{definclosure} and Lemma \ref{Henotalgebraicoveremptyset}, $v_e\in dcl(E)\setminus acl(\emptyset)$. Then $v_e\in dcl(E)$ and $acl(E)\subseteq G$.
\epf

\section{forking and stability}\label{Forking}
In this section we give an explicit characterization of non-forking and prove that $Th(H,\p)$ is stable. Henson and Iovino in \cite{Io}, observed that a Hilbert space expanded with a family of bounded operators is stable. Here, we give an explicit description of non-forking and show that the theory is superstable.

\begin{defin}\label{AindependentB}
Let $E$, $F$, $G\subseteq H$. We say that $E$ is \textit{independent} from $G$ over $F$ if for all $v\in E$ $P_{acl(F)}(v)=P_{acl(F\cup G)}(v)$ and denote it $E\ind^*_F G$.
\end{defin}

\begin{rem}\label{independenceoverempty}\label{independenceoverA}\label{RemarkItIsEnoughIndependenceForOneVector}
Let $\bv$, $\bw\in H^n$ and $E\subseteq H$. Then it is easy to see that:
\bitm
\item $\bv$ is independent from $\bw$ over $\emptyset$ if and only if for every $j,k=1,\dots,n$, $H_{(v_j)_e}\perp H_{(w_k)_e}$.
\item $\bv$ is independent from $\bw$ over $E$ if and only if for every $j,k=1,\dots,n$, $H_{P_E^\perp(v_j)_e}\perp H_{P_E^\perp(w_k)_e}$.
\item $\bv\in H^n$ and $E$, $F\subseteq H$. Then $\bv\ind^*_EF$ if and only if for every $j=1,\dots,n$ $v_j\ind^*_EF$ that is, for all $j=1,\dots,n$ $P_{acl(E)}(v_j)=P_{acl(E\cup F)}(v_j)$.
\eitm
\erm

\begin{fact}[Theorem 4.3 in \cite{Ar}]\label{typeoverempty1}
Let $v$, $w\in\tilde{H}$. Then $tp(v/\emptyset)=tp(w/\emptyset)$ if and only if $(H_v,\p_v,v)$ is isometrically isomorphic to $(H_w,\p_w,w)$, this is, the map sending $v$ to $w$ extends to a unique isometric isomorphism between $(H_v,\p_v)$ and $(H_w,\p_w)$.
\end{fact}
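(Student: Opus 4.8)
The plan is to reduce the statement to the characterization of types over the empty set recorded as item~(1) of the previous work, namely that $tp(v/\emp)=tp(w/\emp)$ if and only if the positive linear functionals $\f_v,\f_w:\A\to\C$, with $\f_v(a)=\langle \p(a)v\mid v\rangle$, coincide. Once this is in hand, the content of the Fact is exactly the uniqueness part of the Gelfand--Naimark--Segal construction: by definition of $H_v$, the subrepresentation $(H_v,\p_v,v)$ is a cyclic representation of $\A$ with cyclic vector $v$ whose associated state is $\f_v$, and two cyclic representations with the same state are unitarily equivalent via a map matching the cyclic vectors. So the whole argument is essentially GNS uniqueness translated into the present notation.

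For the forward implication, I would assume $tp(v/\emp)=tp(w/\emp)$, hence $\f_v=\f_w$, and define $U$ on the dense linear subspace $\mathrm{span}\{\p(a)v:a\in\A\}$ of $H_v$ by $U(\p(a)v)=\p(a)w$. The key computation, using that $\p$ is a $*$-homomorphism, is
\[\langle \p(a)v\mid \p(b)v\rangle=\langle \p(b^*a)v\mid v\rangle=\f_v(b^*a),\]
and likewise $\langle \p(a)w\mid \p(b)w\rangle=\f_w(b^*a)$. Since $\f_v=\f_w$, the map $U$ preserves all these inner products; in particular it sends $0$ to $0$, so it is well defined, and it is isometric on a dense subspace, hence extends to an isometry $U:H_v\to H_w$. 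Its range contains $\mathrm{span}\{\p(a)w\}$, which is dense in $H_w$, so $U$ is onto; and $U\p(a)=\p(a)U$ holds on a dense set by construction, so $U$ intertwines the representations with $Uv=w$. Uniqueness is immediate, since any isometric isomorphism sending $v\mapsto w$ and intertwining must send $\p(a)v$ to $\p(a)w$, and these vectors span a dense subspace of $H_v$.

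For the converse, I would suppose $U:H_v\to H_w$ is an isometric isomorphism with $Uv=w$ and $U\p(a)=\p(a)U$, and simply transport the functional through $U$: for every $a\in\A$,
\[\f_w(a)=\langle \p(a)w\mid w\rangle=\langle \p(a)Uv\mid Uv\rangle=\langle U\p(a)v\mid Uv\rangle=\langle \p(a)v\mid v\rangle=\f_v(a),\]
the last equality using that $U$ preserves inner products. Hence $\f_v=\f_w$, and $tp(v/\emp)=tp(w/\emp)$ by item~(1). The only delicate point is the well-definedness and isometry of $U$ in the forward direction, i.e.\ the GNS step; everything else is routine once the inner products are expressed through $\f_v$ and $\f_w$, and the equivalence with equality of types is imported from the quantifier elimination result.
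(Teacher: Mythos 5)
The paper does not prove this statement: it is quoted as a Fact (Theorem 4.3 in \cite{Ar}) and used as a black box, so there is no in-paper argument to compare yours against line by line. Your proposal is correct and is the natural proof one would expect the citation to contain. You reduce the claim to the other imported characterization of types over $\emptyset$ (equality of the states $\f_v=\f_w$, recorded in this paper as Theorem \ref{typeoverempty4}), and then run the uniqueness part of the GNS construction: the computation $\langle \p(a)v\mid\p(b)v\rangle=\f_v(b^*a)$ extends by sesquilinearity to finite linear combinations, which is exactly what is needed to see that $\sum c_i\p(a_i)v\mapsto\sum c_i\p(a_i)w$ is well defined and isometric on the dense subspace $\mathrm{span}\{\p(a)v\}$ of $H_v$; surjectivity, intertwining, and uniqueness all follow from density, and the converse is the trivial transport of the state through the isomorphism. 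The only caveat worth flagging is logical order rather than mathematics: your argument takes the equivalence $tp(v/\emptyset)=tp(w/\emptyset)\iff\f_v=\f_w$ as given, and inside \cite{Ar} one would have to check that that equivalence is not itself derived from the present Fact; within this paper both are presented as independent imported results (consequences of quantifier elimination), so your derivation of one from the other is legitimate here.
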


\begin{theo}\label{nonforkingextension}
Let $E\subseteq F\subseteq H$, $p\in S_n(E)$ $q\in S_n(F)$ and $\bv=(v_1,\dots,v_n)$, $\bw=(v_1,\dots,v_n)\in H^n$ be such that $p=tp(\bv/E)$ and $q=tp(\bw/F)$. Then $q$ is an extension of $p$ such that $\bw\ind^*_EF$ if and only if the following conditions hold:
    \begin{enumerate}
        \item For every $j=1,\dots,n$, $P_{acl(E)}(v_j)=P_{acl(F)}(w_j)$
        \item For every $j=1,\dots,n$, $(H_{P^\perp_{acl(E)}v_j},\p_{P^\perp_{acl(E)}v_j},P^\perp_{acl(E)}v_j)$ is isometrically isomorphic to $(H_{P^\perp_{acl(F)}w_j},\p_{P^\perp_{acl(F)}w_j},P^\perp_{acl(F)}w_j)$
    \end{enumerate}
\end{theo}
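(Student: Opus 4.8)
The plan is to reduce both directions to the \emph{relative type criterion}: for tuples and a base $E$, $tp(\bv/E)=tp(\bw/E)$ holds iff there is a $\p$-equivariant isometric isomorphism of the generated subrepresentations $H_{E\bv}\to H_{E\bw}$ that fixes $H_E=dcl(E)$ pointwise and sends $\bv\mapsto\bw$; this is the relativization of Fact \ref{typeoverempty1} and follows from it together with the description of $\Aut((H,\p)/E)$ in Fact \ref{Aut(H/E)} and quantifier elimination. Before using it I would record the structural fact that $H_E=dcl(E)$, $H_d=acl(\emptyset)$ (Theorems \ref{definclosure}, \ref{aclemptyset}), and hence $acl(E)=H_E\vee H_d$ (Theorem \ref{aclE}), are all $\p(\A)$-reducing subspaces, so that $P_{acl(E)}$, $P_{acl(F)}$ and their complements commute with every $\p(a)$. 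This yields, for each coordinate, the $\p$-invariant orthogonal splitting $v_j=P_{acl(E)}(v_j)+P^\perp_{acl(E)}(v_j)$ whose free part generates the reducing subspace $H_{P^\perp_{acl(E)}v_j}$, and similarly for $w_j$ over $F$; it is this splitting that separates the ``position'' datum $P_{acl}(\cdot)$ from the ``free'' datum appearing in condition (2).

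For the backward implication I would assume (1) and (2) and first extract independence: (1) gives $P_{acl(F)}(w_j)=P_{acl(E)}(v_j)\in acl(E)$, and since $acl(E)\subseteq acl(F)$ forces $P_{acl(E)}=P_{acl(E)}P_{acl(F)}$, applying $P_{acl(E)}$ gives $P_{acl(E)}(w_j)=P_{acl(E)}(v_j)=P_{acl(F)}(w_j)$, which is $\bw\ind^*_E F$ by Definition \ref{AindependentB}. To see that $q$ extends $p$ I would build the required equivariant isometry coordinate by coordinate: the base components of $v_j$ and $w_j$ agree by the computation just made, while (2) supplies $\p$-equivariant isometries identifying the distinguished vectors $P^\perp_{acl(F)}(w_j)$ and $P^\perp_{acl(E)}(v_j)$ on the free parts. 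Because all free parts lie in $acl(E)^\perp$ and are reducing, these should glue with the identity on the common base to a $\p$-equivariant isometry $H_{E\bw}\to H_{E\bv}$ fixing $E$ and sending $\bw\mapsto\bv$, whence $tp(\bw/E)=tp(\bv/E)=p$ by the relative criterion.

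For the forward implication I would assume $q\supseteq p$ and $\bw\ind^*_E F$. Independence gives $P_{acl(E)}(w_j)=P_{acl(F)}(w_j)$ directly. From $tp(\bw/E)=tp(\bv/E)$ the relative criterion produces a $\p$-equivariant isometry (an automorphism $\sigma$ over $E$ in a saturated model) fixing $H_E$ and sending $\bv\mapsto\bw$; since it preserves the reducing subspace $acl(E)$ and its complement, it carries the splitting of $v_j$ to that of $w_j$ and restricts to a $\p$-equivariant isomorphism $(H_{P^\perp_{acl(E)}v_j},\p,P^\perp_{acl(E)}v_j)\cong(H_{P^\perp_{acl(E)}w_j},\p,P^\perp_{acl(E)}w_j)$. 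Using $P^\perp_{acl(E)}(w_j)=P^\perp_{acl(F)}(w_j)$ (independence) this is exactly (2); and matching the base components together with independence yields (1).

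The hard part will be two entangled points. First, the base component lives in $acl(E)=H_E\vee H_d$, and an automorphism over $E$ fixes $H_E=dcl(E)$ pointwise but only preserves the discrete part $H_d=acl(\emptyset)$ setwise; thus the equality of base components in (1) must be read at the level of the chosen realizations, since only the $dcl(E)$-component is literally type-invariant, the $H_d$-component being pinned down only up to its (finite) orbit. Obtaining (1) as stated therefore requires choosing the realizations compatibly on the discrete part, and this is where the $acl$-versus-$dcl$ distinction must be handled with care. Second, passing from the coordinatewise isomorphisms in (2) to a single equivariant isometry of the full subrepresentations $H_{E\bv}$, $H_{E\bw}$ requires that the free parts $H_{P^\perp_{acl(E)}v_j}$, which all sit inside the reducing subspace $acl(E)^\perp$ but may overlap across coordinates, amalgamate compatibly; verifying that this amalgamation is simultaneously isometric, $\p$-equivariant, and the identity on the base is where the genuine work of the proof concentrates.
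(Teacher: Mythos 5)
Your overall route is exactly the paper's: the paper's entire proof of this theorem is the single line ``Clear from Theorem \ref{typeoverempty1} and Remark \ref{independenceoverempty}'', i.e.\ precisely the reduction you describe to the isomorphism criterion for types together with the projection criterion for $\ind^*$. Your expansion of the case $n=1$ is essentially correct, and your first ``entangled point'' is a genuine defect of the statement rather than of your argument: an automorphism over $E$ fixes $H_E=dcl(E)$ pointwise but moves $H_d$ within its (bounded) orbits, so whenever $H_d$ contains a vector $v$ with nontrivial orbit, taking $E=F=\emptyset$ and $w=\sigma(v)\neq v$ gives $q\supseteq p$ and $\bw\ind^*_EF$ while $P_{acl(\emptyset)}(v)=v\neq w=P_{acl(\emptyset)}(w)$, so condition (1) can only be asserted for a suitable choice of realizations, as you suspected.

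The second point you flag, however, is not merely ``where the work concentrates'' --- it is an unfillable gap in the backward direction for $n\geq 2$, because conditions (1) and (2) are purely coordinatewise and say nothing about the cross-coordinate data $\langle \p(a)P^\perp_{acl(E)}v_j\ |\ P^\perp_{acl(E)}v_k\rangle$ for $j\neq k$, which is part of $tp(\bv/E)$ by quantifier elimination. Concretely: take $E=F=\emptyset$, a nonzero $v\in H_e$, and (in the monster, using Corollary \ref{CoroElementaryEquivalent}) a vector $v'$ with $tp(v'/\emptyset)=tp(v/\emptyset)$ and $H_{v'}\perp H_v$. Then $\bv=(v,v)$ and $\bw=(v,v')$ satisfy (1) and (2), and $\bw\ind^*_\emptyset\emptyset$ holds trivially, yet $d(v_1,v_2)=0\neq d(w_1,w_2)$, so $tp(\bw/\emptyset)\neq tp(\bv/\emptyset)$ and $q$ does not extend $p$. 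No amalgamation argument can repair this: the coordinatewise equivariant isometries supplied by (2) simply do not determine a joint one. The statement is correct for $n=1$ (with the realization caveat above), or for general $n$ if (2) is replaced by an isomorphism of the subrepresentations generated by the \emph{whole tuples} $P^\perp_{acl(E)}\bv$ and $P^\perp_{acl(F)}\bw$ marking all coordinates simultaneously; this is consistent with how the paper actually uses the result later, where existence and stationarity are established for $1$-tuples and lifted to $n$-tuples by induction and transitivity rather than by invoking the $n$-tuple form of this theorem directly.
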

\begin{proof}
Clear from Theorem \ref{typeoverempty1} and Remark \ref{independenceoverempty}
\end{proof}

\brm
Recall that for every $E\subseteq H$ and $v\in H$, $P^\perp_{acl(E)}v=(P^\perp_Ev)_e$.
\erm

\begin{theo}\label{explicitnonforkingrelation}
$\ind^*$ is a freeness relation.
\end{theo}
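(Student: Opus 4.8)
The plan is to verify that $\ind^*$ satisfies the full list of axioms required of a freeness (independence) relation in a continuous stable theory, namely invariance, monotonicity, base monotonicity, transitivity, symmetry, finite character, extension, stationarity over algebraically closed sets, and local character. Once these are in place, the canonicity of independence relations identifies $\ind^*$ with non-forking and, via the local character bound, yields superstability of $Th(H,\p)$. The governing principle throughout is that $\ind^*$ has, by Definition \ref{AindependentB} and Remark \ref{RemarkItIsEnoughIndependenceForOneVector}, a purely geometric description in terms of orthogonal projections onto the subspaces $acl(\cdot)$, so each axiom reduces to an elementary property of projections in $H$ together with the type-classification results already proved.

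First I would dispatch the ``soft'' axioms. \emph{Invariance} is immediate, since $acl$, the Hilbert space operations, and the operators $\p(a)$ are all preserved by $\Aut(H,\p)$, so any automorphism carries the defining equalities $P_{acl(F)}(v)=P_{acl(F\cup G)}(v)$ to the corresponding ones for the images. \emph{Monotonicity} and \emph{finite character} are built into the definition: by Remark \ref{RemarkItIsEnoughIndependenceForOneVector} the relation $E\ind^*_F G$ is the conjunction over single vectors $v\in E$ of $P_{acl(F)}(v)=P_{acl(F\cup G)}(v)$, and shrinking $E$ or $G$ only shrinks the relevant algebraic closures. For \emph{symmetry} I would pass to the reformulation in Remark \ref{independenceoverempty}, where $\bv\ind^*_E\bw$ is equivalent to $H_{P^\perp_{acl(E)}(v_j)_e}\perp H_{P^\perp_{acl(E)}(w_k)_e}$ for all $j,k$; orthogonality of these subspaces is manifestly symmetric in $\bv$ and $\bw$. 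Finally, \emph{base monotonicity} and \emph{transitivity} follow from the tower property of orthogonal projections along the nested chain $acl(F)\subseteq acl(F')\subseteq acl(G)$: a projection onto $acl(G)$ agrees with the projection onto $acl(F)$ precisely when it agrees at each intermediate closure.

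Next I would treat \emph{extension} and \emph{stationarity}, where the earlier results do the work. For extension, given $p\in S_n(E)$ realized by $\bv$ and a set $F\supseteq E$, I would build a realization $\bw$ of $p$ whose projection onto $acl(E)$ equals $P_{acl(E)}(v_j)$ and whose orthogonal-complement data $(H_{P^\perp_{acl(E)}w_j},\p_{P^\perp_{acl(E)}w_j},P^\perp_{acl(E)}w_j)$ is realized as a fresh isometric copy of that of $v_j$ placed orthogonally to $H_F$; the room to do this is provided by Corollary \ref{CoroElementaryEquivalent}. Then conditions (1) and (2) of Theorem \ref{nonforkingextension} hold with respect to $F$, so $tp(\bw/F)$ extends $p$ and $\bw\ind^*_E F$. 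Stationarity is read off the same characterization: if two tuples have the same type over $acl(E)$ and both are $\ind^*$-independent from $G$ over $E$, then by Theorem \ref{nonforkingextension} their projections onto $acl(E)$ agree and their complement subrepresentations are isometrically isomorphic, so Fact \ref{typeoverempty1} forces their types over $acl(E)\cup G$ to coincide; equivalently, a type over $acl(E)$ has a unique non-forking extension to each $G$.

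The main obstacle is \emph{local character}, which is also what upgrades stability to superstability. Here I would show that for a finite tuple $\bv$ and an arbitrary set $G$ there is a countable $F\subseteq G$ with $\bv\ind^*_F G$. The key decomposition is that each $P_{acl(G)}(v_j)$ splits into its discrete part, which lies in $acl(\emptyset)=H_d$ by Theorem \ref{aclemptyset} and so is already accounted for independently of $G$, and its essential part, which lies in the separable subspace generated by $(v_j)_e$ inside $H_G\cap H_e$. Writing this essential part as a norm-limit of projections onto subspaces generated by finitely many elements of $G$ and collecting the countably many generators that appear, across all finitely many $j$, into one countable $F\subseteq G$, continuity of the projection maps gives $P_{acl(F)}(v_j)=P_{acl(G)}(v_j)$, i.e.\ $\bv\ind^*_F G$. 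The delicate point is to make this limit \emph{exact} rather than merely approximate and to keep the base countable uniformly in $j$; once this is secured, the countable local character, together with the uniqueness supplied by stationarity, identifies $\ind^*$ with non-forking and establishes superstability of $Th(H,\p)$.
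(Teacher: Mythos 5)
Your proposal follows essentially the same route as the paper: it verifies the axioms of Fact \ref{LemmaTstableIffItHasGoodIndependenceRelation} one at a time via the projection-theoretic description of $\ind^*$, using a fresh orthogonal copy of the essential subrepresentation (Corollary \ref{CoroElementaryEquivalent}) for existence, Theorem \ref{nonforkingextension} together with Fact \ref{typeoverempty1} for stationarity, and countable generation of $(P_{acl(G)}v)_e$ by elements $\p(a)g$, $g\in G$, for local character. The only points where you under-argue relative to the paper are finite character in the right-hand argument, which is not ``built into the definition'' but requires the paper's approximation/contrapositive argument (equivalently, continuity of projections along the increasing net of subspaces $H_{F\cup G_0}$), and the ``exactness'' worry in local character, which disappears because the countably generated subspace $H_{E_0}$ is closed and therefore already contains the limit vector $w$ exactly.
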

\begin{proof}
By Remark \ref{RemarkItIsEnoughIndependenceForOneVector}, to prove local character, finite character and transitivity it is enough to show them for the case of a 1-tuple. 
\begin{description}
   \item[Local character]
Let $v\in H$ and $E\subseteq H$. Let $w=(P_{acl(E)}(v))_e$. Then there exist a sequence of $(l_k)_{k\in\N}\subseteq \N$, a sequence of finite tuples $(a_1^k,\dots,a_{l_k}^k)_{k\in\N}\subseteq \A$ and a sequence of finite tuples $(e_1^k,\dots,e_{l_k}^k)_{k\in\N}\subseteq E$ such that if $w_k:=\sum_{j=1}^{l_k}\p(a_j^k)e_j^k$ for $k\in\N$, then $w_k\to w$ when $k\to\infty$. Let $E_0=\{e_j^k\ |\ j=1,\dots,l_k\text{ and }k\in\N\}$. Then $v\ind_{E_0}^*E$ and $|E_0|=\aleph_0$.
   \item[Finite character] We show that for $v\in H$, $E,F\subseteq H$, $v\ind^*_EF$ if and only if $v\ind^*_EF_0$ for every finite $F_0\subseteq F$. The left to right direction is clear. For right to left, suppose that $v\nind^*_EF$. Let $w=P_{acl(E\cup F)}(v)-P_{acl(E)}(v)$. Then $w\in$ acl$(E\cup F)\setminus$acl$(E)$. 
   
As in the proof of local character, there exist a sequence of pairs $(l_k,n_k)_{k\in\N}\subseteq \N^2$, a sequence of finite tuples $(a_1^k,\dots,a_{l_k+n_k}^k)_{k\in\N}\subseteq \A$ and a sequence of finite tuples $(e_1^k,\dots,e_{l_k}^k,f_1^k,\dots,f_{n_k}^k)_{k\in\N}$ such that $(e_1^k,\dots,e_{l_k}^k)\subseteq E$, $(f_1^k,\dots,f_{n_k}^k)_{k\in\N}\subseteq F$ and if $w_k:=\sum_{j=1}^{l_k}\p(a_j^k)e_i^k+\sum_{j=1}^{n_k}\p(a_{l_k+j}^k)f_j^k$ for $k\in\N$, then $w_k\to w$ when $k\to\infty$.

Since $v\nind^*_EF$, then $w=P_{acl(E\cup F)}(v)-P_{acl(E)}(v)\neq 0$. Let $\e=\|w\|>0$. Then, there is $k_\e$ such that if $k\geq k_\e$ then $\|w-w_k\|<\e$. Let $F_0:=\{f_1^1,\dots,f_{k_\e}^{n_{k_\e}}\}$, then $F_0$ is a finite subset such that $v\nind^*_EF_0$.
   \item[Transitivity of independence] Let $v\in H$ and $E\subseteq F\subseteq G\subseteq H$. If $v\ind_E^*G$ then  $P_{acl(E)}(v)=P_{acl(G)}(v)$. It is clear that $P_{acl(E)}(v)=P_{acl(F)}(v)=P_{acl(G)}(v)$ so $v\ind_E^*F$ and $v\ind_F^*G$.
Conversely, if $v\ind_E^*F$ and $v\ind_F^*G$, we have that $P_{acl(E)}(v)=P_{acl(F)}(v)$ and $P_{acl(F)}(v)=P_{acl(G)}(v)$. Then $P_{acl(E)}(v)=P_{acl(G)}(v)$ and $v\ind_E^*G$.
   \item[Symmetry]  It is clear from Remark \ref{independenceoverA}.
   \item[Invariance] Let $U$ be an automorphism of $(H,\p)$. Let $\bv=(v_1,\dots,v_n)$,$\bw=(w_1,\dots,w_n)\in H^n$ and $E\subseteq H$ be such that $\bv\ind_E^*\bw$. By Remark \ref{independenceoverA}, this means that for every $j$, $k=1,\dots,n$ $H_{P^\perp_{acl(E)}(v_j)}\perp H_{P^\perp_{acl(E)}(w_k)}$. It follows that for every $j$, $k=1,\dots,n$ $H_{P^\perp_{acl(UE)}(Uv_j)}\perp H_{P^\perp_{acl(UE)}(Uw_k)}$ and, again by Remark \ref{independenceoverA}, $Uv\ind_{acl(UE)}^*Uw$.
   \item[Existence] Let $(\tilde{H},\tilde{\p})$ be the monster model and let $E\subseteq F\subseteq \tilde{H}$ be small sets. We show, by induction on $n$, that for every $p\in S_n(E)$, there exists $q\in S_n(F)$ such that $q$ is a non-forking extension of $p$. 
		\bdc
			\item[Case $n=1$] Let $v\in \tilde{H}$ be such that $p=tp(v/E)$ and let $(H^\prime,\p^\prime,u):=(H_{(P^\perp_{acl(E)}v)},\p_{(P^\perp_{acl(E)}v)},P^\perp_{acl(E)}v)$. Then, by Fact \ref{HensonsLemma}, the model $(\hat{H},\hat{\p}):=(H,\p)\oplus (H^\prime,\p^\prime)$ is an elementary extension of $(H,\p)$. Let $v^\prime:=P_{acl(E)}v+P^\perp_{acl(E)}v_d+u\in \hat{H}$. Then, by Theorem \ref{nonforkingextension}, the type $tp(v^\prime/F)$ is a $\ind^*$-independent extension of $tp(v/E)$.
			\item[Induction step] Now, let $\bv=(v_1,\dots,v_n,v_{n+1})\in \tilde{H}^{n+1}$. By induction hypothesis, there are $v_1^\prime,\dots,v_n^\prime\in H$ such that $tp(v_1^\prime,\dots,v_n^\prime/F)$ is a $\ind^*$-independent extension of $tp(v_1,\dots,v_n/E)$. Let $U$ be an automorphism of the monster model fixing $E$ pointwise such that for every $j=1,\dots,n$, $U(v_j)=v_j^\prime$. Let $v_{n+1}^\prime\in \tilde{H}$ be such that $tp(v_{n+1}^\prime/Fv_1^\prime\cdots v_n^\prime)$ is a $\ind^*$-independent extension of $tp(U(v_{n+1})/Ev_1^\prime,\cdots v_n^\prime)$. Then, by transitivity, $tp(v_1^\prime,\dots,v_n^\prime,v_{n+1}^\prime/F)$ is a $\ind^*$-independent extension of $tp(v_1,\dots,v_n,v_{n+1}/E)$.
		\edc
   \item[Stationarity] Let $(\tilde{H},\tilde{\p})$ be the monster model and let $E\subseteq F\subseteq \tilde{H}$ be small sets. We show, by induction on $n$, that for every $p\in S_n(E)$, if $q\in S_n(F)$ is a $\ind^*$-independent extension of $p$ to $F$ then $q=p^\prime$, where $p^\prime$ is the $\ind^*$-independent extension of $p$ to $F$ built in the proof of existence.
		\bdc
			\item[Case $n=1$] Let $v\in H$ be such that $p=tp(v/E)$, and let $q\in S(F)$ and $w\in H$ be such that $w\models q$. Let $v^\prime$ be as in previous item. Then, by Theorem \ref{nonforkingextension} we have that:
		       \begin{enumerate}
        	   	\item $P_{acl(E)}v=P_{acl(F)}v^\prime=P_{acl(F)}w$
			    \item $(H_{P^\perp_{acl(E)}v},\p_{P^\perp_{acl(E)}v},P^\perp_{acl(E)}v)$ is isometrically isomorphic to both 
\[(H_{P^\perp_{acl(F)}w},\p_{P^\perp_{acl(F)}w},P^\perp_{acl(F)}w)\]
 and 
\[(H_{P^\perp_{acl(F)}v^\prime},\p_{P^\perp_{acl(F)}v^\prime},P^\perp_{acl(F)}v^\prime)\]
			    \end{enumerate}
This means that $P_{acl(F)}v^\prime=P_{acl(F)}w$ and $(H_{P^\perp_{acl(F)}w},\p_{P^\perp_{acl(F)}w},P^\perp_{acl(F)}w)$ is isometrically isomorphic to $(H_{P^\perp_{acl(F)}v^\prime},\p_{P^\perp_{acl(F)}v^\prime},P^\perp_{acl(F)}v^\prime)$ and, therefore $q=tp(v^\prime/F)=p^\prime$.
			\item[Induction step] Let $\bv=(v_1,\dots,v_n,v_{n+1})$, $\bv^\prime=(v_1^\prime,\dots,v_n,v_{n+1}^\prime)$ and $\bw=(w_1,\dots,w_{n+1})\in\tilde{H}$ be such that $\bv\models p$, $\bv^\prime\models p^\prime$ and $\bw\models q$. By transitivity, we have that $tp(v_1^\prime,\dots,v_n^\prime/F)$ and $tp(w_1,\dots,w_n/F)$ are $\ind^*$-independent extensions of $tp(v_1,\dots,v_n/E)$. By induction hypothesis, $tp(v_1^\prime,\dots,v_n^\prime/F)=tp(w_1,\dots,w_n/F)$. Let $U$ be an automorphism of the monster model fixing $E$ pointwise such that for every $j=1,\dots,n$, $U(v_j)=v_j^\prime$ and let $U^\prime$ an automorphism of the monster model fixing $F$ pointwise such that for every $j=1,\dots,n$, $U^\prime(v_j^\prime)=w_j^\prime$. Again by transitivity, $tp(U^{-1}(v_{n+1}^\prime)/Fv_1\cdots v_n)$ and $tp((U^\prime\circ U)^{-1}(w_{n+1})/Fv_1,\cdots v_n)$ are $\ind^*$-independent extensions of $tp(v_{n+1}/Ev_1,\cdots v_n)$. By the case $n=1$ $tp(U^{-1}(v_{n+1}^\prime)/Fv_1\cdots v_n)=tp((U^\prime\circ U)^{-1}(w_{n+1})/Fv_1,\cdots v_n)$ and therefore $p^\prime=tp(v_1^\prime,\dots,v_n^\prime v_{n+1}^\prime/F)=tp(w_1,\dots,w_n,w_{n+1}/F)=q$.
		\edc
\end{description}
\end{proof}

\bfc[Theorem 14.14 in \cite{BBHU}]\label{LemmaTstableIffItHasGoodIndependenceRelation}
A first order continuous logic theory $T$ is stable if and only if there is an independence relation $\ind^*$ satisfying local character, finite character of dependence, transitivity, symmetry, invariance, existence and stationarity. In that case the relation $\ind^*$ coincides with non-forking.
\efc

%\blm[Definition 14.9 and Proposition 14.10 in \cite{BBHU}]\label{LemmaTsuperestableIffCountableLocalCharacter}
%A first order continuous logic theory $T$ is superstable if and only if it is stable and non-forking has local character less that or equal to $\aleph_0$.
%\elm

\bth\label{Tpisuperstable}
The theory $T_\p$ is superstable and the relation $\ind^*$ agrees with non-forking.
\eth
\bpf
By Fact \ref{LemmaTstableIffItHasGoodIndependenceRelation}, $T_\p$ is stable and the relation $\ind^*$ agrees with non-forking. To prove superstability, we have to show that for every $\bv=(v_1,\dots,v_n)\in H$, every $F\subseteq H$ and every $\e>0$, there exist a finite $F_0\subseteq F$ and $\bv^\prime=(v_1^\prime,\dots,v_n^\prime)\in H^n$ such that $\|v_j-v_j^\prime\|<\e$ and $v_j^\prime\ind_{F_0}F$ for every $j\leq n$. As in the proof of local character, for $j=1,\dots,n$ let $(\ a_1^{jk},\dots,\ a_{l_k}^{jk})_{k\in\N}$, $(\ e_1^{jk},\dots,\ e_{l_k}^{jk})_{k\in\N}$, $w_j:=(P_{acl(E)}(v_j))_e$ and $(w_j^k)_{k\in\N}$ be such that $w_j^k:=\sum_{s=1}^{l_k}\p(a_s^{jk})\ e_s^{jk}$ for $k\in\N$, and $w_j^k\to w_j$. For $j=1,\dots,n$, let $K_j\in\N$ be such that $\|w_j-w_j^{K_j}\|<\e$, let $v_j^\prime:=(P_{acl(E)}v_j)_d+w_j^{K_j}$ and let $F_0^j=\{e_s^k\ |\ k\leq K_j\text{ and }s=1,\dots,l_k\}$. If we define $F_0:=U_{j=1}^nF_0^j$, then for every $j=1,\dots,n$ we have that $v_j^\prime\ind_{F_0}^*F$, $|F_0|<\aleph_0$ and $\|v_j-v_j^\prime\|<\e$.
\epf

%\brm
%According Theorem \ref{explicitnonforkingrelation}, Theorem \ref{nonforkingextension} stablishes a characterization of non-forking extensions.
%\erm

Recall that a canonical base for a type $p$ is a minimal set over which $p$ does not fork. In general, this smallest tuple is an imaginary, but in Hilbert spaces  it corresponds to a tuple of real elements. Next theorem gives an explicit description of canonical bases for types in the structure, again we get a tuple of real elements.
\begin{theo}\label{canonicalbase}
Let $\bv=(v_1,\dots,v_n)\in H^n$ and $E\subseteq H$. Then $Cb(tp(\bv/E)):=\{(P_Ev_1,\dots,P_Ev_n)\}$ is a canonical base for the type $tp(\bv/E)$.
\end{theo}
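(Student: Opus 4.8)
The plan is to show that $C := (P_E v_1, \dots, P_E v_n)$ is a canonical base for $p := tp(\bv/E)$; that is, (i) $p$ does not fork over $C$, and (ii) $C$ lies in the definable (indeed algebraic) closure of any set over which $p$ does not fork, so that $C$ is the minimal such set up to interdefinability. Throughout I would work coordinatewise, since by Remark \ref{RemarkItIsEnoughIndependenceForOneVector} independence for the tuple $\bv$ reduces to independence of each $v_j$, and I would freely use the explicit characterization of $\ind^*$ in terms of projections. The key computational fact I expect to lean on is that the relevant projections onto algebraic closures decompose as $P_{acl(X)} = P_{dcl(X)} + P_{acl(\emptyset)}$ on the appropriate subspaces, which is exactly the content of Theorem \ref{aclE}: $acl(X)$ is the Hilbert space generated by $dcl(X) = H_X$ and $acl(\emptyset) = H_d$.

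**Nonforking over $C$.**
First I would verify that $p$ does not fork over $C$, i.e. $\bv \ind^*_{\{C\}} E$. By definition of $\ind^*$ this means $P_{acl(C)}(v_j) = P_{acl(C \cup E)}(v_j) = P_{acl(E)}(v_j)$ for each $j$ (the last equality because $C \subseteq dcl(E) = H_E$ by Theorem \ref{definclosure}, so $acl(C \cup E) = acl(E)$). Thus the task is to show $P_{acl(C)}(v_j) = P_{acl(E)}(v_j)$. Writing $P_{acl(E)}(v_j) = (P_E v_j)_e + (P_E v_j)_d$ via the essential/discrete decomposition, and using that $C$ already contains $P_E v_j$, the essential part $(P_E v_j)_e$ lies in $H_C = dcl(C)$ and the discrete part $(P_E v_j)_d$ lies in $H_d = acl(\emptyset) \subseteq acl(C)$; hence $P_{acl(E)}(v_j) \in acl(C)$, which forces $P_{acl(C)}(v_j) = P_{acl(E)}(v_j)$ since $acl(C) \subseteq acl(E)$. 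This gives nonforking over $C$.

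**Minimality.**
Next I would show $C$ is minimal: if $D \subseteq E$ is any set with $\bv \ind^*_D E$, then $C \in acl(D)$ (equivalently, each $P_E v_j \in acl(D)$). From $\bv \ind^*_D E$ and the definition of $\ind^*$ we get $P_{acl(D)}(v_j) = P_{acl(E)}(v_j)$ for every $j$. The point is then to extract $P_E v_j$ from $P_{acl(D)}(v_j)$. Since $D \subseteq E$, we have $H_D \subseteq H_E$, so the discrete and essential parts of $P_{acl(E)}(v_j)$ are recovered inside $acl(D)$; combining $P_{acl(D)}(v_j) = P_{acl(E)}(v_j)$ with the fact that $(P_E v_j)_d = (v_j)_d \in acl(\emptyset) \subseteq acl(D)$ lets me conclude $P_E v_j = (P_E v_j)_e + (P_E v_j)_d \in acl(D)$, so $C \in acl(D)$. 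Finally, to match the usual formulation of canonical base as a definable-closure object, I would note that $C$ is itself interdefinable with the parameter set over which $p$ is based, using weak elimination of imaginaries for the theory.

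**Expected obstacle.**
The main obstacle is the bookkeeping of the projections: one must be careful that $P_{acl(X)}$ genuinely splits as projection onto $dcl(X)$ plus projection onto $acl(\emptyset) = H_d$, and that $P_E v_j$ (projection onto the closed subspace $H_E = dcl(E)$) interacts correctly with the essential/discrete splitting $v_j = (v_j)_e + (v_j)_d$. The cleanest route is to check that $P_E$ commutes with $P_{H_d}$ (both are $\A$-invariant projections, since $H_E$ is a subrepresentation and $H_d$ is defined via the discrete part of $\p(\A)$), so that $(P_E v_j)_d = P_E(v_j)_d$ and the two decompositions are compatible. Once that commutation is in hand, both the nonforking and the minimality directions follow by the projection identity, and the remaining work is routine.
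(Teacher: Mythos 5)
Your first half (that $tp(\bv/E)$ does not fork over $C=(P_Ev_1,\dots,P_Ev_n)$) is essentially right and matches what the paper does via Theorem \ref{nonforkingextension}, although two of your intermediate identities are false as written: $P_{acl(E)}(v_j)$ is not $(P_Ev_j)_e+(P_Ev_j)_d$ (that sum is just $P_Ev_j$), and $(P_Ev_j)_d\neq (v_j)_d$ in general (take $E=\emptyset$). The correct decomposition is $P_{acl(E)}(v_j)=(v_j)_d+(P_Ev_j)_e$, and with it your argument that $P_{acl(E)}(v_j)\in acl(C)$, hence $P_{acl(C)}(v_j)=P_{acl(E)}(v_j)$, goes through (your observation that $P_{H_d}$ commutes with $P_{H_E}$ is the right repair, since $P_{H_d}\in\p(\A)''$ while $P_{H_E}\in\p(\A)'$).

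The genuine gap is in the minimality step. You only show that $C\in acl(D)$ for sets $D\subseteq E$ over which the type does not fork. That is not the defining property of a canonical base: $C$ must be recoverable from \emph{any} base of the global nonforking extension of $p$ --- equivalently, every automorphism of the monster model fixing that global type must fix $C$ pointwise --- and such bases need not sit inside $E$ (conjugates of $E$, or realizations of $p$ itself, are the typical examples). Nothing in your argument addresses an automorphism that moves $E$ while preserving the global type, and this direction is exactly what is needed for the corollary on weak elimination of imaginaries. The paper closes this gap with the standard Morley-sequence criterion: taking $(v_k)_{k<\omega}$ a Morley sequence in $tp(v/E)$, each $v_k=P_Ev+w_k$ with the $w_k$ pairwise orthogonal of equal norm, so the averages $\frac{v_1+\cdots+v_k}{k}=P_Ev+\frac{w_1+\cdots+w_k}{k}$ converge to $P_Ev$ (the error term has norm $\|w_1\|/\sqrt{k}$), whence $P_Ev\in dcl((v_k)_{k<\omega})$. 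This averaging argument is the key idea your proposal is missing; without it, or some substitute establishing invariance under all automorphisms fixing the global nonforking extension, the proof is incomplete.
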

\begin{proof}
First of all, we consider the case of a 1-tuple. By Theorem \ref{nonforkingextension} $tp(v/E)$ does not fork over $Cb(tp(v/E))$. Let $(v_k)_{k<\omega}$ a Morley sequence for $tp(v/E)$. We have to show that $P_Ev\in dcl((v_k)_{k<\omega})$. By Theorem \ref{nonforkingextension}, for every $k<\omega$ there is a vector $w_k$ such that $v_k=P_Ev+w_k$ and $w_k\perp acl(\{P_Ev\}\cup\{w_j\ |\ j< k\})$. This means that for every $k<\omega$, $w_k\in H_e$ and for all $j$, $k<\omega$, $H_{w_j}\perp H_{w_k}$. For $k<\omega$, let $v^\prime_k:=\frac{v_1+\cdots+v_k}{n}=P_Ev+\frac{w_1+\cdots+w_k}{n}$. Then for every $k<\omega$, $v^\prime_k\in dcl((v_k)_{k<\omega})$. Since $v^\prime_k\to P_ev$ when $k\to\infty$, we have that $P_Ev\in dcl((v_k)_{k<\omega})$.

For the case of a general $n$-tuple, by Remark \ref{RemarkItIsEnoughIndependenceForOneVector}, it is enough to repeat the previous argument in every component of $\bv$.
\end{proof}

\bco
The theory $T_\p$ has weak elimination of imaginaries.
\eco
\bpf
Clear by previous theorem.
\epf

\section{orthogonality and domination}\label{OrthogonalityAndDominationOfTypes}
In this section, we characterize domination and orthogonality of types in terms of similar relationships between positive linear functionals on $\A$. These are the statements Theorem \ref{orthogonalityoverA} and Theorem \ref{dominationoverA}. For a complete description of the relation of domination see \cite{Bue}, Definition 5.6.4.

\begin{defin}
Let $\A^\prime$ be the dual space of $\A$. An element $\f\in\A^\prime$ is called \textit{positive} if $\f(a)\geq 0$ whenever $a\in\A$ is positive, i.e. there is $b\in\A$ such that $a=b^*b$. The set of positive functionals is denoted by $\A^\prime_+$.
\end{defin}

\begin{lemma}\label{vectorsdefinepositivelinearfunctionals}
Let $\A$ be a $C^*$-algebra of operators on a Hilbert space $H$, and let $v\in H$. Then the function $\f_v$ on $\A$ such that for every $S\in\A$, $\f_v(S)=\langle Sv\,|\,v\rangle$ is a positive linear functional.
\end{lemma}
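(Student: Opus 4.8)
The goal is to verify that $\f_v(S) = \langle Sv \mid v\rangle$ defines a positive linear functional on $\A$. There are really two things to check: linearity (and boundedness, if we want an element of the dual space $\A'$) and positivity. The plan is to dispatch linearity by reducing it to the bilinearity properties of the inner product, and then to handle positivity by exploiting the $C^*$-structure of the positive elements.

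First I would establish linearity. For $S, T \in \A$ and scalars $\alpha,\beta\in\C$, one has $(\alpha S + \beta T)v = \alpha(Sv) + \beta(Tv)$ since each $\p(a)$ acts as a bounded operator and the representation is linear; then $\f_v(\alpha S + \beta T) = \langle (\alpha S + \beta T)v \mid v\rangle = \alpha\langle Sv\mid v\rangle + \beta\langle Tv\mid v\rangle = \alpha\f_v(S) + \beta\f_v(T)$, using linearity of the inner product in its first argument. Boundedness follows from Cauchy--Schwarz: $|\f_v(S)| = |\langle Sv\mid v\rangle| \le \|Sv\|\,\|v\| \le \|S\|\,\|v\|^2$, so $\f_v$ is a bounded linear functional with $\|\f_v\| \le \|v\|^2$, hence genuinely an element of $\A'$.

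The substantive step is positivity. Suppose $S\in\A$ is positive, so by definition there is $b\in\A$ with $S = b^*b$. The key move is to use that $\p$ is a $*$-homomorphism, so that the adjoint in $\A$ is compatible with the Hilbert-space adjoint: writing the action as operators on $H$, $Sv = b^*bv$, and therefore
\[
\f_v(S) = \langle b^*b\,v \mid v\rangle = \langle b\,v \mid b\,v\rangle = \|bv\|^2 \ge 0,
\]
where the middle equality is exactly the adjoint relation $\langle b^* w \mid v\rangle = \langle w \mid b v\rangle$ applied with $w = bv$. This shows $\f_v(S)\ge 0$ for every positive $S$, which is precisely the positivity condition in the definition.

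The main conceptual point — and the only place where anything beyond formal inner-product manipulation is needed — is the use of the $*$-structure in the positivity step: one must know that the involution on $\A$ corresponds to the operator adjoint on $H$, which is built into the hypothesis that $\A$ acts as a $C^*$-algebra of operators (equivalently, that $\p$ is a $*$-homomorphism). Once that is invoked, the factorization $S = b^*b$ turns $\f_v(S)$ into a squared norm, and there is no remaining obstacle. No appeal to the earlier stability machinery is required; this is a self-contained GNS-type computation.
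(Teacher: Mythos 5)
Your proof is correct and complete: linearity and boundedness follow from the sesquilinearity of the inner product and Cauchy--Schwarz, and positivity from $\langle b^*bv\mid v\rangle=\|bv\|^2\ge 0$ via the compatibility of the involution with the operator adjoint. The paper states this lemma without proof, treating it as a standard fact from $C^*$-algebra theory, and your argument is exactly the standard one it implicitly relies on.
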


Recall Theorem 4.4 in \cite{Ar}
\begin{theo}\label{typeoverempty4}
Let $v,w\in H$. Then $tp(v/\emptyset)=tp(w/\emptyset)$ if and only if $\f_v=\f_w$, where $\f_v$ denotes the positive linear functional on $\A$ defined by $v$ as in Lemma \ref{vectorsdefinepositivelinearfunctionals}.
\end{theo}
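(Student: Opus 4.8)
The plan is to reduce both implications to Fact \ref{typeoverempty1}, which characterizes equality of types over $\emp$ as the existence of an isometric isomorphism $U\colon(H_v,\p_v)\to(H_w,\p_w)$ with $Uv=w$ that intertwines the $\A$-action. Read this way, the statement is exactly the Gelfand--Naimark--Segal uniqueness principle: a cyclic representation is determined up to unitary equivalence by its associated vector functional $\f_v$. So the whole argument is to translate that principle into the language of $(H,\p)$.

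For the forward direction, I would assume $tp(v/\emp)=tp(w/\emp)$ and take the intertwining isometric isomorphism $U$ supplied by Fact \ref{typeoverempty1}, so that $Uv=w$ and $U\p(a)=\p(a)U$ for all $a\in\A$. Then for each $a\in\A$,
\[
\f_v(a)=\bra \p(a)v \mid v \ket=\bra U\p(a)v \mid Uv \ket=\bra \p(a)w \mid w \ket=\f_w(a),
\]
since $U$ preserves the inner product and commutes with the representation. Hence $\f_v=\f_w$, and this direction is immediate.

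For the converse I would build the isomorphism by hand. Assuming $\f_v=\f_w$, define $U$ on the dense subspace $\{\p(a)v\mid a\in\A\}$ of $H_v$ by $U(\p(a)v):=\p(a)w$. The key computation is
\[
\bra \p(a)v \mid \p(b)v \ket=\bra \p(b^*a)v \mid v \ket=\f_v(b^*a)=\f_w(b^*a)=\bra \p(a)w \mid \p(b)w \ket,
\]
which is legitimate because $b^*a\in\A$ and $\p$ is a $*$-homomorphism. Specializing to $a=b$ gives $\|\p(a)v\|=\|\p(a)w\|$, which yields both well-definedness (if $\p(a)v=\p(a')v$ then $\p(a-a')w=0$, so $\p(a)w=\p(a')w$) and the fact that $U$ is inner-product preserving; thus $U$ extends to an isometry $H_v\to H_w$ whose range contains the dense set $\{\p(a)w\mid a\in\A\}$ and is therefore onto. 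A one-line check, $U\p(b)\p(a)v=U\p(ba)v=\p(ba)w=\p(b)\p(a)w=\p(b)U\p(a)v$, gives the intertwining property on the dense subspace and hence everywhere, and unitality of $\A$ yields $Uv=U(\p(1)v)=\p(1)w=w$. Fact \ref{typeoverempty1} then produces $tp(v/\emp)=tp(w/\emp)$.

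The only genuine obstacle is the well-definedness of $U$ in the converse, and this is precisely what the hypothesis $\f_v=\f_w$ is designed to handle through the identity $\bra \p(a)v \mid \p(b)v \ket=\f_v(b^*a)$; once that is in place, everything else is the routine transport of the GNS construction into the structure $(H,\p)$.
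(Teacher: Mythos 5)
Your proof is correct: the reduction to Fact \ref{typeoverempty1} followed by the GNS uniqueness argument (the polarization identity $\langle \p(a)v \mid \p(b)v\rangle = \f_v(b^*a)$ giving well-definedness and isometry of $U$, plus unitality giving $Uv=w$) is exactly the standard route, and you correctly identify well-definedness as the only point needing care. Note that the paper itself does not prove this statement here --- it is recalled as Theorem 4.4 of \cite{Ar} --- so your argument supplies the omitted proof, in what is surely the same way it is done in that reference, where it follows its Theorem 4.3 (Fact \ref{typeoverempty1} here).
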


\begin{defin}\label{relationsbetweenpositivelinearfunctionals}
Let $\f$ and $\psi$ be positive linear functionals on $\A$.
\ben
\item They are called \textit{orthogonal} ($\f\perp\psi$) if $\|\f-\psi\|=\|\f\|+\|\psi\|$.
\item Also, $\f$ is called \textit{dominated} by $\psi$ ($\f\leq\psi$) if there exist $\gamma>0$ such that the functional $\gamma\psi-\f$ is positive.
%\item $\f$ is said to be \textit{almost dominated} ($\f \ll\psi$) by $\psi$ if for every $(S_n)\subseteq \A$ if $S_n\to 0$ in $L^2(\A,\f)$, and $S_n$ is Cauchy in $L^2(\A,\psi)$ then $S_n\to 0$ in $L^2(\A,\psi)$.
\een
\end{defin}

\begin{theo}\label{HvinHw}
Let $v,w\in H$. Then $(H_v,\p_v,v)$ is isometrically isomorphic to a subrepresentation of $(H_w,\p_w,w)$ if and only if $\f_v\leq\f_w$.
\end{theo}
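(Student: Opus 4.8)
The plan is to prove both directions by translating the representation-theoretic statement into the language of GNS constructions, since the cyclic representations $(H_v,\p_v,v)$ and $(H_w,\p_w,w)$ are precisely the GNS representations associated to the positive functionals $\f_v$ and $\f_w$. Recall that the GNS construction builds, from a positive functional $\f$ on $\A$, a Hilbert space $H_\f$ with a cyclic vector $\xi_\f$ satisfying $\f(a)=\langle \p_\f(a)\xi_\f\mid \xi_\f\rangle$, and that this data is determined up to unitary equivalence by $\f$ alone. By Theorem \ref{typeoverempty4}, $(H_v,\p_v,v)$ is exactly this GNS data for $\f_v$, so the entire problem reduces to a standard fact about domination of positive functionals.

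First I would treat the direction $\f_v\leq\f_w\Rightarrow$ embedding. Assume $\gamma\f_w-\f_v$ is positive for some $\gamma>0$; equivalently $\f_v\leq\gamma\f_w$ as a pointwise inequality on positive elements. The key classical step is the \emph{Radon--Nikodym theorem for positive functionals}: whenever $\f_v\le\gamma\f_w$, there is a positive operator $T$ in the commutant $\p_w(\A)'$, with $0\le T\le\gamma$, such that $\f_v(a)=\langle \p_w(a)T^{1/2}\xi_w\mid T^{1/2}\xi_w\rangle$ for all $a\in\A$. Setting $\eta:=T^{1/2}\xi_w$, this says $\f_v(a)=\langle \p_w(a)\eta\mid\eta\rangle$, i.e.\ the vector $\eta\in H_w$ induces the functional $\f_v$. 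The closed cyclic subspace $\overline{\p_w(\A)\eta}\subseteq H_w$ together with the restriction of $\p_w$ is then a subrepresentation with cyclic vector $\eta$ whose associated functional is $\f_v$; by the uniqueness clause of the GNS construction (equivalently Fact \ref{typeoverempty1}/Theorem \ref{typeoverempty4}) the map $v\mapsto\eta$ extends to an isometric isomorphism of $(H_v,\p_v,v)$ onto this subrepresentation of $(H_w,\p_w,w)$.

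For the converse, suppose $(H_v,\p_v,v)$ embeds isometrically as a subrepresentation of $(H_w,\p_w,w)$, and let $\eta\in H_w$ be the image of $v$, so that $\f_v(a)=\langle\p_w(a)\eta\mid\eta\rangle$ for every $a\in\A$. I would write $\eta=P\xi_w + \eta'$ decomposing with respect to the cyclic vector, but more directly: for any positive $a=b^*b$ we have $\f_v(a)=\|\p_w(b)\eta\|^2$, and I want to bound this by $\gamma\|\p_w(b)\xi_w\|^2=\gamma\f_w(a)$. Because $\eta$ lies in $H_w=\overline{\p_w(\A)\xi_w}$, one approximates $\eta$ by vectors $\p_w(c)\xi_w$; the boundedness of $\eta$ relative to the cyclic vector, together with the fact that elements of the commutant move $\xi_w$ to $\eta$, yields the existence of $\gamma$ with $\f_v\le\gamma\f_w$, which is exactly $\f_v\le\f_w$ in the sense of Definition \ref{relationsbetweenpositivelinearfunctionals}.

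I expect the main obstacle to be the converse direction, specifically extracting the single constant $\gamma$ uniformly over all positive elements from the mere existence of an embedding $\eta$. The clean way to obtain it is again through the commutant: the embedding produces an element $T'$ of $\p_w(\A)'$ (the orthogonal projection onto the embedded copy, composed with the unitary) implementing $\f_v=\langle\p_w(\cdot)T'\xi_w\mid T'\xi_w\rangle$, and since any such operator is bounded, $\gamma:=\|T'\|^2$ works. So the real content, in both directions, is the correspondence between positive operators in $\p_w(\A)'$ and functionals dominated by a multiple of $\f_w$ — the noncommutative Radon--Nikodym theorem — and I would cite it (e.g.\ from a standard operator-algebra reference) rather than reprove it.
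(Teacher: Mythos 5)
Your first direction ($\f_v\le\f_w$ implies embedding) is correct and is essentially the paper's route: the paper simply cites Corollary 3.3.8 of Pedersen here, which is exactly the Radon--Nikodym correspondence you describe, producing $\eta=T^{1/2}w\in H_w$ with $\f_\eta=\f_v$ and then using GNS uniqueness (Fact \ref{typeoverempty1}) to get the pointed isometric isomorphism.

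The converse is where there is a genuine gap, and you flagged the right danger point but your proposed fix does not close it. The operator you describe --- the orthogonal projection onto the embedded copy composed with the intertwining unitary --- is not an element of $\p_w(\A)'$ carrying $w$ to $\eta$: the projection of $w$ onto $\overline{\p_w(\A)\eta}$ has no reason to equal $\eta$, and the unitary maps $H_v$ onto the embedded copy rather than acting on $H_w$. More seriously, the intermediate claim you are reaching for --- that any $\eta\in H_w=\overline{\p_w(\A)w}$ with $\f_\eta=\f_v$ automatically satisfies $\f_v\le\g\f_w$ for some $\g$ --- is false as stated: take $\A=C[0,1]$ acting by multiplication on $L^2[0,1]$, $w=1$ and $\eta(t)=t^{-1/3}$; then $\eta\in H_w$, but $\f_\eta(a)=\int_0^1 a(t)\,t^{-2/3}\,dt$ is not dominated by any multiple of $\f_w(a)=\int_0^1 a(t)\,dt$ (test against positive $a$ concentrated near $0$). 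The vectors of $H_w$ whose induced functionals are dominated by a multiple of $\f_w$ are precisely those of the form $T'w$ with $T'$ a \emph{bounded} operator in the commutant, and producing such a $T'$ from the mere existence of the embedding is the actual content of this direction. The paper supplies it by invoking Henle's Radon--Nikodym theorem for rings of operators to obtain a bounded positive $P\in\p_w(\A)'$ with $Pw=v'$ and then computing $\f_v(a)=\langle P^*\p(a)Pw\ |\ w\rangle\le\|P\|^2\f_w(a)$; to complete your argument you must either cite such a result and verify that its hypotheses are met by your $\eta$, or make precise a notion of ``subrepresentation of $(H_w,\p_w,w)$'' under which the embedded cyclic vector is guaranteed to be of the form $T'w$.
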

\bpf
Suppose $(H_v,\p_v,v)$ is isometrically isomorphic to a subrepresentation of $(H_w,\p_w,w)$. Then there exists a vector $v^\prime\in H_w$ such that $(H_v,\p_v,v)\simeq (H_{v^\prime},\p_{v^\prime},v^\prime)$. By Radon Nikodim Theorem for rings of operators (see \cite{Hen}), there exists a bounded positive operator $P:(H_w,\p_w,w)\to (H_{v^\prime},\p_{v^\prime},v^\prime)$ such that $Pw=v^\prime$ and $P$ commutes with every element of $\p_v(\A)$. Let $\g=\|P\|^2$. Then, for every positive element $a\in\A$, $\f_{v}(a)=\f_{v^\prime}(a)=\langle \p(a)v^\prime\ |\ v^\prime\rangle=\langle \p(a)Pw\ |\ Pw\rangle=\langle P^*\p(a)Pw\ |\ w\rangle=\langle \p(a)\|P\|^2w\ |\ w\rangle\leq \g\langle \p(a)w\ |\ w\rangle=\g\f_{w}(a)$ which means that $\g\f_w-\f_{v}$ is positive and $\f_v\leq\f_w$.

The converse is Corollary 3.3.8 in \cite{Pe}.
\epf

Recall the important GNS theorem:

\bth[Theorem 3.3.3. and Remark 3.4.1. in \cite{Pe}]\label{GelfandNaimarkSegal}
Let $\f$ be a positive functional on $\A$. Then there exists a cyclic representation $(H_\f,\p_\f,v_\f)$ such that for all $a\in\A$, $\f(a)=\langle \p_\f(a)v_\f|v_\f\rangle$. This representation is called the \textit{Gelfand-Naimark-Segal construction}.
\eth

\begin{lemma}\label{phivperpphiwnotismoetricallyisomorphic}
Let $v,w\in H$. If $\f_v\perp\f_w$, then $(H_v,\p_v,v)$ is not isometrically isomorphic to any subrepresentation of $(H_w,\p_w,w)$.
\end{lemma}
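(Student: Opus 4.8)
The plan is to argue by contraposition. Suppose that $(H_v,\p_v,v)$ \emph{is} isometrically isomorphic to some subrepresentation of $(H_w,\p_w,w)$; I want to conclude that $\f_v$ and $\f_w$ are not orthogonal, i.e.\ that $\|\f_v-\f_w\|<\|\f_v\|+\|\f_w\|$. The natural tool is Theorem \ref{HvinHw}, which translates the hypothesis of subrepresentation-embedding directly into the statement $\f_v\leq\f_w$, that is, there exists $\g>0$ with $\g\f_w-\f_v$ positive. So the real content is to show that domination of positive functionals is incompatible with orthogonality of positive functionals (unless $\f_v=0$, which I should check separately).

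First I would recall that for a positive linear functional on a $C^*$-algebra the norm is attained on the positive part of the unit ball and, on a unital algebra, $\|\f\|=\f(1)$. This identity is what lets orthogonality and domination both be read off from values at the unit. Concretely, if $\f\perp\ps$ then $\|\f-\ps\|=\|\f\|+\|\ps\|$, and I would like to express the orthogonality of positive functionals in the more usable equivalent form: $\f\perp\ps$ if and only if for every $\e>0$ there is a positive $a$ in the unit ball with $\f(a)>\|\f\|-\e$ and $\ps(a)<\e$ (and symmetrically), i.e.\ the two functionals are "carried" by almost-disjoint positive elements. This is the standard characterization of orthogonality in the dual of a $C^*$-algebra, and it is the correct bridge to domination.

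The key step is then the clash: if $\f_v\leq\f_w$ via $\g\f_w-\f_v\geq 0$, then for every positive $a$ we have $\f_v(a)\leq\g\,\f_w(a)$. Feeding this into the orthogonality characterization gives a contradiction unless $\f_v=0$. Indeed, orthogonality would supply positive unit-ball elements $a$ on which $\f_v$ is nearly maximal while $\f_w(a)$ is nearly $0$; but domination forces $\f_v(a)\leq\g\,\f_w(a)$, so $\f_v(a)$ must also be nearly $0$, whence $\|\f_v\|=0$ and $v$ generates the trivial representation. Ruling out this degenerate case is routine: if $\f_v=0$ then $\langle\p(a)v\mid v\rangle=0$ for all $a$, so in particular $\langle v\mid v\rangle=\f_v(1)=0$ (using that $\p$ is unital), giving $v=0$, and the isometric-isomorphism statement becomes vacuous or trivial.

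I expect the main obstacle to be pinning down the exact equivalent formulation of orthogonality of positive functionals that I can combine with domination, and justifying it cleanly rather than hand-waving through the norm identity $\|\f-\ps\|=\|\f\|+\|\ps\|$. The cleanest route is probably to avoid re-proving that characterization from scratch and instead cite the relevant fact from Pedersen (the same source \cite{Pe} already used for the converse of Theorem \ref{HvinHw}), where orthogonality, domination, and their mutual exclusivity for positive functionals are developed; the lemma then follows by quoting Theorem \ref{HvinHw} to get $\f_v\leq\f_w$ and invoking that a nonzero positive functional cannot be simultaneously dominated by and orthogonal to another positive functional.
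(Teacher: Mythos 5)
Your proof is correct, and it opens exactly as the paper's does: both arguments feed the embedding hypothesis into Theorem \ref{HvinHw} to obtain $\f_v\leq\f_w$, i.e.\ a $\g>0$ with $\g\f_w-\f_v$ positive. Where you diverge is in how that domination is played off against orthogonality. The paper realizes $\g\f_w-\f_v$ as a vector functional $\f_u$ via the GNS construction (Theorem \ref{GelfandNaimarkSegal}) and then computes $\|\f_w-\f_v\|=\|(1-\g)\f_w+\f_u\|=|1-\g|\,\|\f_w\|+\|\f_u\|\neq\|\f_w\|+\|\f_v\|$, contradicting the definition of $\f_v\perp\f_w$. You instead use the Pedersen characterization of orthogonality --- which is precisely Fact \ref{phiperppsi}, stated in the paper a few lines later --- together with $\f_v(a)\leq\g\f_w(a)$ for positive $a$, to force $\|\f_v\|=0$. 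Your route buys two things: it reuses a fact the paper needs anyway (your computation is essentially the one in Lemma \ref{phi1lessthanphi2perppsi1lessthatpsi2}), and it avoids a delicate point in the paper's norm computation, since the identity $\|(1-\g)\f_w+\f_u\|=|1-\g|\,\|\f_w\|+\|\f_u\|$ is automatic only when $\g\leq 1$; for $\g>1$ one is taking the norm of a \emph{difference} of positive functionals, which equals the sum of the norms only under an additional orthogonality hypothesis. The price you pay is the residual degenerate case $\|\f_v\|=0$, i.e.\ $v=0$; calling it ``vacuous or trivial'' is slightly quick, because the zero representation does embed into everything, so the lemma as literally stated needs $v\neq 0$ --- but this is a defect of the statement shared by the paper's own proof (whose final ``$\neq$'' also silently assumes $\f_v\neq 0$), not a gap specific to your argument.
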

\bpf
Suppose $\f_v\perp\f_w$, and $(H_v,\p_v,v)$ is isometrically isomorphic to subrepresentation of $(H_w,\p_w,w)$. By Theorem \ref{HvinHw} $\f_v\leq\f_w$; let $\g>0$ be a real number such that $\g\f_w-\f_v$ is a bounded positive functional and let $u\in H$ be such that $\f_u=\g\f_w-\f_v$, which is possible by GNS Theorem. Then $\f_v=\g\f_w-\f_u$, and $\|\f_w-\f_v\|=\|\f_w-\g\f_w+\f_u\|=\|(1-\g)\f_w+\f_u\|=|1-\g|\|\f_w\|+\|\f_u\|\neq \|\f_w\|+\|\f_v\|$, but this contradicts $\f_v\perp\f_w$. 
\epf

Here, a few facts that will be needed to prove Theorem \ref{HvperpHw}:

\brm
Recall that two representations are said to be \textit{disjoint} if they do not have any common subrepresentation up to isometric isomorphism.
\erm

\bfc[Proposition 3 in \cite{Co}, Chapter 5, Section 2]\label{FactDisjointSubrepresentationsHaveDisjointProjections}
Two subrepresentations $(H_1,\p_1)$, $(H_2,\p_2)$ of $(H,\p)$ are disjoint if and only if there is a projection $P$ in $\p(\A)^\prime\cap \p(\A)^{\prime\prime}$ such that if $P_1$ and $P_2$ are the projections on $H_1$ and $H_2$ respectively, we have that $PP_1=P_1$ and $(I-P)P_2=P_2$.
\efc

\bfc[Corollary 2.2.5 in \cite{Pe}]\label{FactMMisDoubleCommutantOfPiOfA}
Let $\A$ be a $C^*$algebra and $\p:\A\to B(H)$ a nondegenerate representation of $\A$. Let $\MM$ be the strong closure of $\pi(\A)$. Then $\MM$ is weakly closed and $\MM=\A^{\prime\prime}$.
\efc

\begin{theo}[Kaplanski densitiy theorem. Theorem $2.3.3.$ in \cite{Pe}]\label{TheoremKaplansky}
Let $\A$ be a $C^*$-subalgebra of $B(H)$ with strong closure $\MM$. Then the unit ball $Ball_1(\A)$ of $\A$ is strongly dense in the unit ball $Ball_1(\MM)$ of $\MM$. Furthermore, the set of selfadjoint elements in $Ball_1(\A)$ is strongly dense in the set of selfadjoint elements of $Ball_1(\MM)$.
\end{theo}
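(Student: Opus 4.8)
The statement is the classical Kaplansky density theorem, so my plan is to reconstruct its standard proof rather than invoke new machinery. The argument splits into a coarse density step carrying no norm control, an analytic lemma that supplies the norm control, and a matrix amplification that reduces the general case to the self-adjoint one.

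First I would prove the norm-free density of the self-adjoint parts, namely that $\A_{\mathrm{sa}}$ is strongly dense in $\MM_{\mathrm{sa}}$. Since $\A$ is strongly dense in $\MM=\A^{\prime\prime}$ by Fact \ref{FactMMisDoubleCommutantOfPiOfA}, every $T\in\MM_{\mathrm{sa}}$ is a strong limit of a net $(S_\alpha)\subseteq\A$. Passing to $\frac{1}{2}(S_\alpha+S_\alpha^*)\in\A_{\mathrm{sa}}$ and using that the adjoint is weakly (though not strongly) continuous, these self-adjoint averages converge weakly to $T$. Because $\A_{\mathrm{sa}}$ is convex, its weak and strong closures coincide by Hahn--Banach, so $T$ lies in the strong closure of $\A_{\mathrm{sa}}$.

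The heart of the proof is the analytic lemma: for $f\in C_0(\R)$ the map $x\mapsto f(x)$, defined by continuous functional calculus, is strongly continuous on the bounded self-adjoint operators. I would first verify this for resolvents $t\mapsto(t-\lambda)^{-1}$ with $\lambda\notin\R$: writing $(S_\alpha-\lambda)^{-1}-(T-\lambda)^{-1}=(S_\alpha-\lambda)^{-1}(T-S_\alpha)(T-\lambda)^{-1}$ and using the uniform bound $\|(S_\alpha-\lambda)^{-1}\|\le|\Im\lambda|^{-1}$ turns the strong convergence $S_\alpha\to T$ into strong convergence of the resolvents. I would then extend to all of $C_0(\R)$ via Stone--Weierstrass, since the resolvents generate $C_0(\R)$ and the uniform operator bounds legitimize the passage to limits. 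This is the step I expect to be the main obstacle, precisely because multiplication and the adjoint are not jointly strongly continuous, so one cannot simply approximate $f$ by arbitrary polynomials; it is the boundedness of the resolvents that rescues strong continuity.

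With the lemma in hand, the self-adjoint case follows. Fix $g\in C_0(\R)$ with $g|_{[-1,1]}=\mathrm{id}$ and $\|g\|_\infty\le1$. Given $T\in\MM_{\mathrm{sa}}$ with $\|T\|\le1$, the norm-free step produces $S_\alpha\in\A_{\mathrm{sa}}$ with $S_\alpha\to T$ strongly; then $g(S_\alpha)\in\A_{\mathrm{sa}}$ with $\|g(S_\alpha)\|\le1$, and the lemma gives $g(S_\alpha)\to g(T)=T$ strongly, since $\sigma(T)\subseteq[-1,1]$ forces $g(T)=T$. Finally, for the full unit ball I would use the amplification $x\mapsto\left(\begin{smallmatrix}0 & x\\ x^* & 0\end{smallmatrix}\right)$, which is self-adjoint, preserves the norm, and carries $Ball_1(\A)$ into the self-adjoint unit ball of $M_2(\A)$; applying the self-adjoint result inside $M_2(\A)\subseteq M_2(\MM)=B(H\oplus H)$ and reading off the upper-right corner yields norm-$\le1$ strongly dense approximants for an arbitrary element of $Ball_1(\MM)$.
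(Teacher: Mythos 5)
The paper does not prove this statement: it is quoted as a labelled fact from Pedersen (Theorem 2.3.3 in \cite{Pe}) and used as a black box in the proof of Theorem \ref{HvperpHw}, so there is no in-paper argument to compare against. Your reconstruction is the standard proof --- indeed essentially the one in the cited source (norm-free density of $\A_{\mathrm{sa}}$ in $\MM_{\mathrm{sa}}$ via convexity and coincidence of weak and strong closures, strong continuity of the functional calculus for $C_0(\R)$, then the $2\times 2$ matrix trick) --- and it is correct in substance. Two points need tightening. First, you state the key lemma as strong continuity of $x\mapsto f(x)$ ``on the bounded self-adjoint operators,'' but you then apply it to the net $(S_\alpha)\subseteq\A_{\mathrm{sa}}$ produced by the norm-free density step, and that net carries no uniform norm bound --- producing norm control is the entire content of the theorem, so it cannot be assumed of these approximants. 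Fortunately your own resolvent argument never uses boundedness of the net: $\|(S_\alpha-\lambda)^{-1}\|\le|\Im\lambda|^{-1}$ holds for every self-adjoint $S_\alpha$, and closure of the class of strongly continuous functions under products only needs the uniform bound $\|f(S_\alpha)\|\le\|f\|_\infty$, automatic for $f\in C_0(\R)$. So state and prove the lemma for arbitrary nets of self-adjoint operators; as written it does not cover its own application. Second, in the amplification step $M_2(\MM)$ is not equal to $B(H\oplus H)$ (that would force $\MM=B(H)$); what you need is $M_2(\A)\subseteq M_2(\MM)\subseteq B(H\oplus H)$ together with the observation that the strong closure of $M_2(\A)$ in $B(H\oplus H)$ contains $M_2(\MM)$ (strong convergence there is entrywise), so that the self-adjoint case applies to $\left(\begin{smallmatrix}0&T\\T^*&0\end{smallmatrix}\right)$ and the upper-right corner gives the desired contractive approximants. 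With those repairs the argument is complete.
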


\bfc[Lemma 3.2.3 in \cite{Pe}]\label{phiperppsi}
Let $\f$ and $\psi$ be two positive linear functionals on $\A$. Then, $\f\perp\psi$ if and only if for all $\e>0$ there exists a positive element $a\in\A$ with norm less than or equal to $1$, such that $\f(e-a)<\e$ and $\psi(a)<\e$.
\efc

\blm\label{phi1lessthanphi2perppsi1lessthatpsi2}
Let $\f_1$, $\f_2$, $\psi_1$ and $\psi_2$ be positive linear functionals on $\A$ such that $\f_1\leq\f_2$ and $\psi_1\leq\psi_2$. If $\f_2\perp\psi_2$, then $\f_1\perp\psi_1$.
\elm
\bpf
Let $\g_1>0$ and $\g_2>0$ be such that $\g_1\f_2-\f_1$ and $\g_2\psi_2-\psi_1$ are positive. By Fact \ref{phiperppsi}, for $\e>0$ there exists a positive $a\in\A$ with norm less than or equal to $1$ such that $\f_2(e-a)<\frac{\e}{\g_1+\g_2}$ and $\psi_2(a)<\frac{\e}{\g_1+\g_2}$.
Then $\f_1(e-a)\leq\g_1\f_2(e-a)<\frac{\g_1\e}{\g_1+\g_2}<\e$ and $\psi_1(a)\leq\g_2\psi_2(a)<\frac{\g_2\e}{\g_1+\g_2}<\e$. 
\epf

\begin{theo}\label{HvperpHw}
Let $v,w\in H$. $\f_v\perp\f_w$ if and only if no subrepresentation of $(H_v,\p_v,v)$ is isometrically isomorphic to a subrepresentation of $(H_w,\p_w,w)$.
\end{theo}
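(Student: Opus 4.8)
The plan is to prove the two implications separately: I would use Fact \ref{FactDisjointSubrepresentationsHaveDisjointProjections} to turn disjointness into the existence of a suitable central projection, Fact \ref{phiperppsi} to turn orthogonality into an approximation statement, and Theorem \ref{TheoremKaplansky} together with Fact \ref{FactMMisDoubleCommutantOfPiOfA} as the bridge between them.

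For the direction \emph{``disjoint $\Rightarrow$ orthogonal''} I would argue directly. Write $\MM=\p(\A)''$, which by Fact \ref{FactMMisDoubleCommutantOfPiOfA} is the strong closure of $\p(\A)$, and let $P_{H_v},P_{H_w}\in\p(\A)'$ be the projections onto the invariant subspaces $H_v,H_w$. If the two subrepresentations are disjoint, Fact \ref{FactDisjointSubrepresentationsHaveDisjointProjections} provides a central projection $P\in\p(\A)'\cap\p(\A)''$ with $PP_{H_v}=P_{H_v}$ and $(I-P)P_{H_w}=P_{H_w}$; applying these to the cyclic vectors gives $Pv=v$ and $Pw=0$. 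Since $P$ is a self-adjoint element of the unit ball of $\MM$, Theorem \ref{TheoremKaplansky} furnishes self-adjoint $b_i\in Ball_1(\A)$ with $\p(b_i)\to P$ strongly, and then the positive elements $a_i:=b_i^2\in Ball_1(\A)$ satisfy $\p(a_i)\to P$ strongly as well. Consequently $\f_v(e-a_i)=\langle (I-\p(a_i))v\,|\,v\rangle\to\langle(I-P)v\,|\,v\rangle=0$ and $\f_w(a_i)=\langle\p(a_i)w\,|\,w\rangle\to\langle Pw\,|\,w\rangle=0$, so for every $\e>0$ some $a_i$ witnesses the criterion of Fact \ref{phiperppsi}, whence $\f_v\perp\f_w$.

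For the converse \emph{``orthogonal $\Rightarrow$ disjoint''} I would argue by contradiction, reducing to the pointed picture of Theorem \ref{HvinHw}. Suppose $\f_v\perp\f_w$ yet some nonzero invariant subspace $K_1\subseteq H_v$ is isometrically isomorphic, via an intertwiner $\Phi$, to an invariant subspace $K_2\subseteq H_w$. The preliminary observation I would establish is that \emph{if $P_K$ is the projection onto an invariant subspace $K\subseteq H_v$, then $u:=P_Kv$ satisfies $\f_u\le\f_v$}: indeed $P_K\in\p(\A)'$, so $\f_u(a)=\langle\p(a)P_Kv\,|\,v\rangle$ and $\f_v(a)-\f_u(a)=\langle\p(a)(I-P_K)v\,|\,v\rangle\ge0$ for positive $a$, because $\p(a)$ and $I-P_K$ are commuting positive operators; moreover $u\neq0$ when $K\neq0$, since $v$ is cyclic for $H_v$ and $K$ is invariant. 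Using this on both sides I would produce a single nonzero positive functional $\theta$ with $\theta\le\f_v$ and $\theta\le\f_w$, realized by a cyclic vector generating the common subrepresentation; then Lemma \ref{phi1lessthanphi2perppsi1lessthatpsi2} forces $\theta\perp\theta$, i.e. $\theta=0$, a contradiction. (Equivalently, once such a $\theta=\f_{v'}=\f_{w'}$ with $v'\in H_v$, $w'\in H_w$ is exhibited, Lemma \ref{phivperpphiwnotismoetricallyisomorphic} applied to $v'$ and $w'$ already rules out the common cyclic piece.)

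The hard part is precisely the italicized step in the converse. The isomorphism $\Phi$ matches an arbitrary vector of $K_1$ with an arbitrary vector of $K_2$, whereas the domination estimate $\f_u\le\f_v$ is only available for vectors of the special form $u=P_Kv$, namely projections of the distinguished cyclic vector. One must therefore select the cyclic vector of the common subrepresentation so that it is \emph{simultaneously} dominated by $\f_v$ on the $H_v$-side and by $\f_w$ on the $H_w$-side; carrying this out means comparing $P_{K_1}v$ and $P_{K_2}w$ under $\Phi$, and this is where the central-projection structure of Fact \ref{FactDisjointSubrepresentationsHaveDisjointProjections} together with the cyclicity of $(H_v,\p_v,v)$ and $(H_w,\p_w,w)$ is genuinely needed. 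Everything else is routine manipulation of inner products and the strong approximation supplied by Kaplansky's theorem.
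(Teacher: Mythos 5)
Your first half (``disjoint $\Rightarrow$ orthogonal'') is correct and is exactly the paper's argument: central projection from Fact \ref{FactDisjointSubrepresentationsHaveDisjointProjections}, Kaplansky plus Fact \ref{FactMMisDoubleCommutantOfPiOfA} to approximate it, and Fact \ref{phiperppsi} to conclude. Your extra step of squaring the self-adjoint approximants $b_i$ to get positive ones in the unit ball (using joint strong continuity of multiplication on bounded sets) is a detail the paper glosses over, and it is welcome.

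The converse is where you have a genuine gap, and you have also misdiagnosed what fills it. The ``hard part'' you isolate --- that your elementary computation only yields $\f_u\le\f_v$ for the special vector $u=P_{K_1}v$, whereas $\Phi$ need not carry $P_{K_1}v$ to $P_{K_2}w$ --- does not require the central-projection structure of Fact \ref{FactDisjointSubrepresentationsHaveDisjointProjections} (which you cannot invoke there anyway: disjointness is the conclusion of that implication, not a hypothesis). What closes it is the forward direction of Theorem \ref{HvinHw}, i.e.\ the Radon--Nikodym theorem for rings of operators already used in its proof: for \emph{any} nonzero $v'\in K_1$, the cyclic representation $(H_{v'},\p_{v'},v')$ is a subrepresentation of $(H_v,\p_v,v)$, hence $\f_{v'}\le\f_v$; and for $w':=\Phi(v')$, which is some cyclic vector of a subrepresentation of $(H_w,\p_w,w)$, the same theorem gives $\f_{w'}\le\f_w$ with no need to compare $w'$ to $P_{K_2}w$. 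Since $\Phi$ is an isometric intertwiner, $\f_{v'}=\f_{w'}$, so Lemma \ref{phi1lessthanphi2perppsi1lessthatpsi2} yields $\f_{v'}\perp\f_{v'}$, forcing $\f_{v'}=0$ and $v'=0$, a contradiction; equivalently, Lemma \ref{phivperpphiwnotismoetricallyisomorphic} applied to $v'$ and $w'$ rules out the isomorphism. This is precisely how the paper argues (it passes through Lemmas \ref{phi1lessthanphi2perppsi1lessthatpsi2} and \ref{phivperpphiwnotismoetricallyisomorphic}, with the dominations $\f_{v'}\le\f_v$, $\f_{w'}\le\f_w$ supplied implicitly by Theorem \ref{HvinHw}); once you substitute that theorem for your projection-of-the-cyclic-vector computation, your outline becomes a complete proof along the paper's lines.
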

\bpf
Suppose $\f_v\perp\f_w$. By Lemma \ref{phi1lessthanphi2perppsi1lessthatpsi2}, if $(H_{v^\prime},\p_{v^\prime},v^\prime)$ is a subrepresentation of $(H_v,\p_v,v)$ and $(H_{w^\prime},\p_{w^\prime},w^\prime)$  is a subrepresentation of $(H_w,\p_w,w)$, then $\f_{v^\prime}\perp\f_{w^\prime}$, By Lemma \ref{phivperpphiwnotismoetricallyisomorphic}, $(H_{v^\prime},\p_{v^\prime},v^\prime)$ is not isometrically isomorphic to $(H_{w^\prime},\p_{w^\prime},w^\prime)$, and the conclusion follows.

Conversely, suppose no subrepresentation of $(H_v,\p_v,v)$ is isometrically isomorphic to a subrepresentation of $(H_w,\p_w,w)$. Then the representations $(H_v,\p_v)$ and $(H_w,\p_w)$ are disjoint. By Fact \ref{FactDisjointSubrepresentationsHaveDisjointProjections}, there is a projection $P\in\p(\A)^\prime\cap\p(\A)^{\prime\prime}$ such that $PP_v=P_v$ and $(I-P)P_w=P_w$. Then, $\f_v(I-P)=\langle (I-P)v\ |\ v\rangle=\langle (v-PP_vv)\ |\ v\rangle=\langle (v-v)\ |\ v\rangle=0$. On the other hand, $\f_w(P)=\langle Pw\ |\ w\rangle=\langle w-(w-Pw)\ |\ w\rangle=\langle w-(I-P)w\ |\ w\rangle=\langle w-(I-P)P_ww\ |\ w\rangle=\langle w-P_ww\ |\ w\rangle=\langle w-w\ |\ w\rangle=0$. By Fact \ref{FactMMisDoubleCommutantOfPiOfA} and Theorem \ref{TheoremKaplansky}, the projection $P$ is strongly approximable by positive elements in $\p(\A)$ and therefore, for $\e>0$ there exists a positive element $a\in\A$ with norm less than or equal to $1$, such that $\f_v(e-a)<\e$ and $\f_w(a)<\e$. By Fact \ref{phiperppsi}, $\f_v\perp\f_w$.
\epf

\begin{lemma}\label{orthogonalityoverempty}
Let $p,q\in S_1(\emptyset)$, let $v$, $w\in H$ be such that $v\models p$ and $w\models q$.
Then, $p\perp^a q$ if and only if $\f_{v_e}\perp \f_{w_e}$.
\end{lemma}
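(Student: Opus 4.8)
The plan is to read $p\perp^a q$ as the assertion that \emph{every} pair of realizations is $\ind^*_{\emptyset}$-independent, and then to translate this, through the dictionary already set up, into a disjointness statement about the cyclic representations generated by the essential parts. Concretely, $p\perp^a q$ means that for all $v^\prime\models p$ and $w^\prime\models q$ in the monster $(\tilde H,\tilde\p)$ one has $v^\prime\ind^*_{\emptyset}w^\prime$, which by Remark \ref{independenceoverempty} is the same as $H_{v^\prime_e}\perp H_{w^\prime_e}$. First I would record that $\f_{v_e}$ depends only on $p$: if $\tau\in\Aut(\tilde H,\tilde\p)$ sends $v$ to $v^\prime$ then, since $\tau$ commutes with every $\p(a)$ by Fact \ref{Aut(H/E)}, it preserves $H_e=\ker(\p(\A)_d)$, whence $\tau(v_e)=v^\prime_e$ and $\f_{v^\prime_e}=\f_{v_e}$; likewise $\f_{w_e}$ depends only on $q$. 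So the right-hand side of the lemma is genuinely a condition on the pair $(p,q)$.

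For the implication $\f_{v_e}\perp\f_{w_e}\Rightarrow p\perp^a q$, I would take arbitrary realizations $v^\prime,w^\prime$. By the previous paragraph $\f_{v^\prime_e}=\f_{v_e}\perp\f_{w_e}=\f_{w^\prime_e}$, so Theorem \ref{HvperpHw} shows that the representations $(H_{v^\prime_e},\p_{v^\prime_e})$ and $(H_{w^\prime_e},\p_{w^\prime_e})$ are disjoint. Fact \ref{FactDisjointSubrepresentationsHaveDisjointProjections} then produces a projection $P\in\p(\A)^\prime\cap\p(\A)^{\prime\prime}$ with $PP_{v^\prime_e}=P_{v^\prime_e}$ and $(I-P)P_{w^\prime_e}=P_{w^\prime_e}$; hence $H_{v^\prime_e}\subseteq\mathrm{ran}\,P$ while $H_{w^\prime_e}\subseteq\ker P=(\mathrm{ran}\,P)^\perp$, so $H_{v^\prime_e}\perp H_{w^\prime_e}$ and $v^\prime\ind^*_{\emptyset}w^\prime$. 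As the realizations were arbitrary, $p\perp^a q$.

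The hard part is the converse, which I would prove contrapositively: assuming $\f_{v_e}\not\perp\f_{w_e}$, I must exhibit a \emph{single} pair of realizations that fails to be independent. By Theorem \ref{HvperpHw} there are nonzero $v^{\prime\prime}\in H_{v_e}$ and $w^{\prime\prime}\in H_{w_e}$ with $(H_{v^{\prime\prime}},\p_{v^{\prime\prime}},v^{\prime\prime})\simeq(H_{w^{\prime\prime}},\p_{w^{\prime\prime}},w^{\prime\prime})$, so $\f_{v^{\prime\prime}}=\f_{w^{\prime\prime}}$ and hence $tp(v^{\prime\prime}/\emptyset)=tp(w^{\prime\prime}/\emptyset)$ by Theorem \ref{typeoverempty4}. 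Using homogeneity of the monster I would choose $\sigma\in\Aut(\tilde H,\tilde\p)$ with $\sigma(v^{\prime\prime})=w^{\prime\prime}$ and set $v^\prime:=\sigma(v)$. Then $v^\prime\models p$, and since $\sigma$ commutes with $\p(\A)$ one gets $v^\prime_e=\sigma(v_e)$ and $H_{v^\prime_e}=H_{\sigma(v_e)}=\sigma(H_{v_e})\ni\sigma(v^{\prime\prime})=w^{\prime\prime}$. Thus $0\neq w^{\prime\prime}\in H_{v^\prime_e}\cap H_{w_e}$, so $H_{v^\prime_e}\not\perp H_{w_e}$ and $v^\prime\nind^*_{\emptyset}w$, witnessing $p\not\perp^a q$. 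The essential obstacle is exactly this last step: converting the abstract common subrepresentation delivered by Theorem \ref{HvperpHw} into two concrete, overlapping realizations inside a fixed ambient space, which is precisely what the homogeneity of the monster model supplies.
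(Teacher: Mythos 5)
Your proof is correct, and while it passes through the same basic dictionary as the paper (Remark \ref{independenceoverempty} to turn $\perp^a$ into orthogonality of the cyclic subspaces $H_{v_e}$, $H_{w_e}$, and Theorem \ref{HvperpHw} to relate that to disjointness of representations and orthogonality of functionals), both of your directions are argued differently. For $\f_{v_e}\perp\f_{w_e}\Rightarrow p\perp^a q$ you go through Fact \ref{FactDisjointSubrepresentationsHaveDisjointProjections} to extract a central projection separating $H_{v'_e}$ from $H_{w'_e}$; the paper instead proves the contrapositive directly by projecting $v_e$ onto $H_{w_e}$, showing $\f_{P_{w_e}v_e}$ is dominated by both $\f_{v_e}$ and $\f_{w_e}$ (by the Radon--Nikodym-style argument of Theorem \ref{HvinHw}), and invoking Lemma \ref{phi1lessthanphi2perppsi1lessthatpsi2}. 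For the other direction the paper simply asserts that orthogonality of $H_{v_e}$ and $H_{w_e}$ for \emph{all} pairs of realizations rules out a common subrepresentation; your automorphism argument (transporting the common cyclic vector $v''$ to $w''$ by homogeneity to manufacture a non-independent pair of realizations) is precisely the justification that step needs, so your write-up is in this respect more complete than the paper's. Two small points you gloss over: Theorem \ref{HvperpHw} a priori yields isomorphic subrepresentations that need not be cyclic, so you should pass to the cyclic subrepresentation generated by a nonzero vector and its image under the intertwiner to get $v''$, $w''$ with $\f_{v''}=\f_{w''}$; and your preliminary observation that $\f_{v_e}$ depends only on $tp(v/\emptyset)$ (because automorphisms preserve $H_e=\ker(\p(\A)_d)$) is needed and correctly argued, though the paper never makes it explicit.
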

\begin{proof}
Suppose $p\perp^a q$. By Remark \ref{independenceoverempty}, this implies that $H_{v_e}\perp H_{w_e}$ for all $v\models p$ and $w\models q$. Let $v\models p$ and $w\models q$. Then no subrepresentation of $(H_{v_e},\p_{v_e},v_e)$ is isometrically isomorphic to any subrepresentation of $(H_{w_e},\p_{w_e},w_e)$. By Lemma \ref{HvperpHw}, this implies that $\f_{v_e}\perp \f_{w_e}$.

Conversely, if $p\not\perp^a q$ there are $v$, $w\in H$ such that $v\models p$, $w\models q$ and $H_{v_e}\not\perp H_{w_e}$. This implies that there exist elements $a_1$, $a_2\in\A$ such that $\p(a_1)v_e\not\perp\p(a_2)w_e$. This means that $0\neq\langle\p(a_1)v_e\ |\ \p(a_2)w_e \rangle=\langle v_e\ |\ \p(a_1^*a_2)w_e \rangle$. So, we can assume that there exists an element $a\in\A$ such that $v_e\not\perp\p(a)w_e$. Since $v_e=P_{w_e}v_e+P_{w_e}^\perp v_e$ and $P_{w_e}v_e\neq 0$, we can prove that $\f_{P_{w_e}v_e}\leq \f_{v_e}$ by using a procedure similar to the one used in the proof of Theorem \ref{HvinHw} and, since $P_{w_e}v_e\in H_{w_e}$, we get $\f_{P_{w_e}v_e}\leq \f_{w_e}$. By Lemma \ref{phi1lessthanphi2perppsi1lessthatpsi2}, this implies that $\f_{v_e}\not\perp\f_{w_e}$.
\end{proof}

\begin{theo}\label{orthogonalityoverA}
Let $E\subseteq H$. Let $p,q\in
S_1(E)$, let $v$, $w\in H$ be such that $v\models p$ and $w\models q$. Then, $p\perp_E^a q$ if and only if $\f_{P^\perp_E(v_e)}\perp \f_{P^\perp_E(w_e)}$
\end{theo}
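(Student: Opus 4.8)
The plan is to reduce the statement over an arbitrary parameter set $E$ to the case over $\emptyset$ that was already handled in Lemma \ref{orthogonalityoverempty}. The key observation is that orthogonality over $E$ of the types $p,q\in S_1(E)$ should translate, via the characterization of non-forking from Theorem \ref{nonforkingextension} and Remark \ref{independenceoverempty}, into a statement purely about the projections $P^\perp_E(v_e)$ and $P^\perp_E(w_e)$. Recall the remark following Theorem \ref{nonforkingextension} that $P^\perp_{acl(E)}v = (P^\perp_E v)_e$, which lets me pass freely between the essential part of the $E$-orthogonal projection and the $acl(E)$-orthogonal projection.

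First I would unwind the definition of $\perp^a_E$. By the definition of almost orthogonality relative to $E$, $p\perp^a_E q$ means that for all realizations $v\models p$ and $w\models q$ with $v$ independent from $w$ over $E$ in the sense of $\ind^*$, the appropriate geometric orthogonality holds; concretely, by Remark \ref{independenceoverempty}, the relevant condition is $H_{P^\perp_E(v_e)}\perp H_{P^\perp_E(w_e)}$, i.e. $H_{P^\perp_{acl(E)}(v)}\perp H_{P^\perp_{acl(E)}(w)}$ after identifying via the remark. This says exactly that no subrepresentation of $(H_{P^\perp_E(v_e)},\p,P^\perp_E(v_e))$ is isometrically isomorphic to a subrepresentation of $(H_{P^\perp_E(w_e)},\p,P^\perp_E(w_e))$.

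Next I would apply Theorem \ref{HvperpHw} directly to the vectors $v':=P^\perp_E(v_e)$ and $w':=P^\perp_E(w_e)$. That theorem states precisely that $\f_{v'}\perp\f_{w'}$ if and only if no subrepresentation of $(H_{v'},\p_{v'},v')$ is isometrically isomorphic to a subrepresentation of $(H_{w'},\p_{w'},w')$. Combining this with the geometric reformulation from the previous step yields
\[
p\perp^a_E q \iff \f_{P^\perp_E(v_e)}\perp \f_{P^\perp_E(w_e)},
\]
which is the desired equivalence. The structure of the argument mirrors Lemma \ref{orthogonalityoverempty} almost verbatim, the only change being that $v_e,w_e$ are replaced throughout by $P^\perp_E(v_e),P^\perp_E(w_e)$.

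**The main obstacle I anticipate** is justifying that the orthogonality condition over $E$ is genuinely insensitive to the choice of realizations and depends only on the projected vectors $P^\perp_E(v_e)$ and $P^\perp_E(w_e)$ up to the isometric-isomorphism type of the generated subrepresentations. This requires invoking stationarity (from Theorem \ref{explicitnonforkingrelation}) so that the non-forking extension, and hence the geometric data $H_{P^\perp_{acl(E)}(v)}$, is well-defined independently of which realization I pick. I would make sure to state that $p,q$ being over $E$ guarantees $P^\perp_E(v_e)$ and $P^\perp_E(w_e)$ carry invariant information, so that the reduction to the $\emptyset$-case via Theorem \ref{HvperpHw} is legitimate; the remaining manipulations are then routine and identical to the proof of Lemma \ref{orthogonalityoverempty}.
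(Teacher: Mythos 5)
Your proposal is correct and follows essentially the same route as the paper, which simply reduces the statement to Lemma \ref{orthogonalityoverempty} by replacing $v_e, w_e$ with $P^\perp_E(v_e), P^\perp_E(w_e)$ and reusing Remark \ref{independenceoverempty} together with Theorem \ref{HvperpHw}. One small caution: almost orthogonality $p\perp^a_E q$ means that \emph{every} pair of realizations is independent over $E$ (not that independent pairs satisfy a further condition), which is exactly how the cited lemma uses it, so your substantive reduction is the right one.
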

\begin{proof}
Clear by Lemma \ref{orthogonalityoverempty}.
\end{proof}

\begin{theo}
Let $E\subseteq H$. Let $p,q\in
S_1(E)$. Then, $p\perp^a q$ if and only if $p\perp q$.
\end{theo}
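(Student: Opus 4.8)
The plan is to prove the two implications separately; the direction $p \perp q \Rightarrow p \perp^a q$ is formal, while the converse is where the characterization of non-forking does the real work. For the forward direction, recall that $p \perp q$ means that for every $F \supseteq E$ the non-forking extensions $p', q' \in S_1(F)$ satisfy $p' \perp^a_F q'$. Taking $F = E$, where $p$ and $q$ are their own non-forking extensions, yields $p \perp^a_E q$ at once.

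For the converse, suppose $p \perp^a_E q$ and fix realizations $v \models p$, $w \models q$. By Theorem \ref{orthogonalityoverA} this is the statement $\f_{P^\perp_E(v_e)} \perp \f_{P^\perp_E(w_e)}$, which by the remark following Theorem \ref{nonforkingextension} (namely $P^\perp_{acl(E)}u = (P^\perp_E u)_e$) I rewrite as $\f_{P^\perp_{acl(E)}v} \perp \f_{P^\perp_{acl(E)}w}$. Now fix any $F \supseteq E$ and let $v' \models p'$, $w' \models q'$ realize the non-forking extensions over $E$. By Theorem \ref{nonforkingextension}, the cyclic subrepresentation generated by $P^\perp_{acl(F)}v'$ is isometrically isomorphic to the one generated by $P^\perp_{acl(E)}v$, and likewise for $w'$ against $w$. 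By Fact \ref{typeoverempty1} together with Theorem \ref{typeoverempty4}, an isometric isomorphism of these cyclic subrepresentations forces equality of the associated GNS functionals, so $\f_{P^\perp_{acl(F)}v'} = \f_{P^\perp_{acl(E)}v}$ and $\f_{P^\perp_{acl(F)}w'} = \f_{P^\perp_{acl(E)}w}$.

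Consequently the orthogonality $\f_{P^\perp_{acl(E)}v} \perp \f_{P^\perp_{acl(E)}w}$ transports verbatim to $\f_{P^\perp_F(v'_e)} \perp \f_{P^\perp_F(w'_e)}$, and Theorem \ref{orthogonalityoverA} applied over $F$ gives $p' \perp^a_F q'$. Since $F \supseteq E$ was arbitrary, $p \perp q$. The main obstacle is precisely the step equating the functional computed over $F$ with the one computed over $E$: it rests on reading Theorem \ref{nonforkingextension} as the assertion that non-forking is exactly the invariance of the isometric-isomorphism type of the orthogonal-to-base essential part, hence of its GNS functional. Once that invariance is available, the characterization in Theorem \ref{orthogonalityoverA} makes $\perp^a_E$ and $\perp^a_F$ (for the non-forking extensions) refer to the \emph{same} pair of positive linear functionals, so almost orthogonality over the base already forces almost orthogonality over every extension, collapsing $\perp$ onto $\perp^a$.
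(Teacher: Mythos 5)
Your proposal is correct and follows essentially the same route as the paper: the forward direction is the formal one, and for the converse you transfer the orthogonality of the functionals $\f_{P^\perp_{acl(E)}v_e}$ and $\f_{P^\perp_{acl(E)}w_e}$ to any $F\supseteq E$ by noting that Theorem \ref{nonforkingextension} makes the functional attached to the non-forking extension equal to the one attached to the base type, exactly the equality $\f_{P^\perp_F(v_e)}=\f_{P^\perp_E v_e}$ used in the paper. Your write-up merely makes explicit (via Fact \ref{typeoverempty1} and Theorem \ref{typeoverempty4}) why that equality of functionals holds, which the paper asserts without comment.
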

\bpf
Assume $p\perp^a q$, $E\subseteq F\subseteq H$ are small subsets of the monster model and $p^\prime$, $q^\prime\in S_1(F)$ are non-forking extensions of $p$ and $q$ respectively. Let $v$, $w\in H$ be such that $v\models p^\prime$ and $w\models q^\prime$, then $\f_{P^\perp_F(v_e)}=\f_{P^\perp_Ev_e}\perp\f_{P^\perp_Ew_e}=\f_{P^\perp_F(w_e)}$. By Lemma \ref{orthogonalityoverempty}, this implies that $p^\prime\perp^a q^\prime$. Therefore $p\perp q$.

The converse is trivial.
\epf

\begin{lemma}\label{dominationoverempty}
Let $p,q\in S_1(\emptyset)$ and let $v$, $w\in H$ be such that $v\models p$ and $w\models q$.
Then, $p\vartriangleright_\emptyset q$ if and only if $\f_{w_e}\leq\f_{v_e}$.
\end{lemma}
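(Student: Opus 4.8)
The plan is to connect the model-theoretic domination $p \vartriangleright_\emptyset q$ with the analytic domination $\f_{w_e} \leq \f_{v_e}$ via the characterization of non-forking from Theorem \ref{nonforkingextension} and the isometric-isomorphism criterion from Theorem \ref{HvinHw}. Recall that $p \vartriangleright_\emptyset q$ means that whenever $v \models p$, any realization of $q$ (over a suitable base) is independent from a set over $v$ precisely when it is independent over $\emptyset$; concretely, that the type $q$ is ``controlled'' by $p$. Since Theorem \ref{typeoverempty4} tells us that a type in $S_1(\emptyset)$ is determined by its functional $\f_v$, and since the essential part $v_e$ is the only part relevant to forking (as $v_d \in acl(\emptyset)$ by Fact \ref{Hdsubsetalgebraicclosureofemptyset}), the natural guess is that domination of types reduces to domination of the associated essential functionals.

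First I would unwind the definition of domination in this structure. For $1$-types over $\emptyset$, using the explicit description of $\ind^*$ in Remark \ref{independenceoverempty}, $p \vartriangleright_\emptyset q$ should amount to the following: for every realization $w$ of $q$, there is a realization $v$ of $p$ such that $w \ind^*_\emptyset G$ whenever $v \ind^*_\emptyset G$, i.e. the subrepresentation generated by $w_e$ is absorbed by that generated by $v_e$. More precisely, domination of $q$ by $p$ forces that any orthogonality detected against $w_e$ is already detected against $v_e$, which by Theorem \ref{HvperpHw} and Theorem \ref{orthogonalityoverA} means no subrepresentation of $(H_{w_e},\p_{w_e},w_e)$ escapes $(H_{v_e},\p_{v_e},v_e)$. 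This is exactly the statement that $(H_{w_e},\p_{w_e},w_e)$ embeds as a subrepresentation of $(H_{v_e},\p_{v_e},v_e)$, which by Theorem \ref{HvinHw} is equivalent to $\f_{w_e} \leq \f_{v_e}$.

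For the forward direction, assuming $p \vartriangleright_\emptyset q$, I would argue contrapositively: if $\f_{w_e} \not\leq \f_{v_e}$, then by Theorem \ref{HvinHw} the representation $(H_{w_e},\p_{w_e},w_e)$ has a piece not isometrically embeddable into $(H_{v_e},\p_{v_e},v_e)$, so one can split off a subrepresentation of $w_e$ orthogonal to $v_e$ and use it to build a set $G$ witnessing $w \nind^*_\emptyset G$ while $v \ind^*_\emptyset G$, contradicting domination. For the converse, assuming $\f_{w_e} \leq \f_{v_e}$, Theorem \ref{HvinHw} gives an isometric embedding of the essential subrepresentation of $w$ into that of $v$; then for any $G$ with $v \ind^*_\emptyset G$, the condition $H_{v_e} \perp H_{G_e}$ from Remark \ref{independenceoverempty} transfers through the embedding to give $H_{w_e} \perp H_{G_e}$, hence $w \ind^*_\emptyset G$, which is domination.

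The main obstacle I expect is the first direction, namely extracting from the abstract domination relation a concrete witnessing set $G$ that separates the essential subrepresentations. This requires translating the failure of the analytic inequality $\f_{w_e} \leq \f_{v_e}$ into a genuine forking configuration, which is where Lemma \ref{phivperpphiwnotismoetricallyisomorphic} and the disjointness criterion of Theorem \ref{HvperpHw} do the real work: one must produce a subrepresentation of $w_e$ orthogonal to all of $v_e$, realize it by a fresh vector in an elementary extension (via the existence of non-forking extensions established in Theorem \ref{explicitnonforkingrelation}), and check that this vector is dependent on $w$ but independent from $v$ over $\emptyset$. Verifying that this configuration indeed contradicts domination — rather than merely orthogonality — is the delicate point, since domination is a stronger hypothesis than non-orthogonality and the argument must use its full strength.
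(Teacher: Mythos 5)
Your converse direction is essentially the paper's: from $\f_{w_e}\leq\f_{v_e}$, Theorem \ref{HvinHw} produces a realization $w'$ of $q$ inside $H_v$, and independence from $v$ over $\emptyset$ transfers to $w'$. The forward direction, however, contains a genuine gap. You argue by contraposition and claim that if $\f_{w_e}\not\leq\f_{v_e}$ then one can ``split off a subrepresentation of $w_e$ orthogonal to $v_e$'' and use it to build a set $G$ with $v\ind^*_\emptyset G$ but $w\nind^*_\emptyset G$. This conflates two different relations: failure of domination of positive functionals, and the existence of a subrepresentation of $(H_{w_e},\p_{w_e})$ disjoint from $(H_{v_e},\p_{v_e})$ --- which is what forking against a set $G$ actually detects, by Remark \ref{independenceoverempty} and Theorem \ref{HvperpHw}. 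The second is strictly stronger. Concretely, let $\A=C[0,1]$ act by multiplication on $H=L^2[0,1]$ (here $H_d=0$, so $u_e=u$ for every $u$), and take $v=1$, $w(t)=t^{-1/4}$. Then $H_w=H_v=L^2[0,1]$, so no subrepresentation of $H_{w}$ is orthogonal to $H_{v}$ and no witnessing $G$ exists; yet $\f_w\leq\f_v$ fails, since $\gamma\f_v-\f_w$ has density $\gamma-t^{-1/2}$, which is negative near $0$ for every $\gamma$. The same conflation appears in your second paragraph, where ``no subrepresentation of $H_{w_e}$ escapes $H_{v_e}$'' is equated with ``$H_{w_e}$ embeds as a subrepresentation of $H_{v_e}$'': quasi-containment (no disjoint piece) is weaker than the containment with prescribed cyclic vector that Theorem \ref{HvinHw} speaks about.

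The paper's forward direction is direct and never produces an orthogonal piece. Given witnesses $v'\models p$, $w'\models q$ of the domination, it tests independence against sets $E$ spanning the orthogonal complement of $H_{v'_e}$ inside $H_e$: from $v'\ind^*_\emptyset E\Rightarrow w'\ind^*_\emptyset E$ one gets $P_Ew'_e=0$, hence $w'_e\in H_{v'_e}$ and $H_{w'_e}\subseteq H_{v'_e}$, and Theorem \ref{HvinHw} is then applied to this actual containment to yield $\f_{w'_e}\leq\f_{v'_e}$; Theorem \ref{typeoverempty4} transfers this to the given $v,w$. (Note that even this step leans on the forward direction of Theorem \ref{HvinHw}, i.e.\ on boundedness of the Radon--Nikodym operator, which the example above also puts under pressure --- but that is an issue with the paper's argument, not one your disjointness route repairs.) A minor further point: domination $p\vartriangleright_\emptyset q$ is witnessed by a single pair of realizations (``there exist $v\models p$ and $w\models q$ such that for all $G$, $v\ind^*_\emptyset G$ implies $w\ind^*_\emptyset G$''), not the ``for every $w$ there is $v$'' form you state; the paper's proof of the forward implication starts precisely from such a witnessing pair.
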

\begin{proof}
Suppose $p\vartriangleright_\emptyset q$. Suppose that $v^\prime$ and $w^\prime$ are such that $v^\prime\models p$, $w^ \prime\models q$ and if $v^\prime\ind^*_\emptyset E$ then $w^\prime\ind_\emptyset^*E$ for every $E$. Then for every $E\subseteq H$
\[P_Ev^\prime_e=0\Rightarrow P_Ew^\prime_e=0\]
This implies that $w^\prime_e\in H_{v^\prime_e}$, and $H_{w^\prime_e}\subseteq H_{v^\prime_e}$. By Theorem \ref{HvinHw}, $\f_{w_e}=\f_{w^\prime_e}\leq\f_{v^\prime_e}=\f_{v_e}$.

For the converse, suppose $\f_{w_e}\leq\f_{v_e}$. Then, by Theorem \ref{HvinHw} $H_{w_e}$ is isometrically isomorphic to a subrepresentation of $H_{v_e}$, which implies that there is $w^\prime\in H_v$ such that $w^\prime\models tp(w/\emptyset)$ and for every $E\subseteq H$
\[P_Ev_e=0\Rightarrow P_Ew^\prime_e=0\]
This means than $tp(w/\emptyset)\vartriangleleft_\emptyset tp(v/\emptyset)$.
\end{proof}

\begin{theo}\label{dominationoverA}
Let $E$, $F$ and $G$ be small subsets of $\tilde{H}$ such that $E$, $F\subseteq G$ and $p\in S_1(E)$ and $q\in S_1(F)$ be two stationary types. Then $p\vartriangleright_G q$ if and only if there exist $v$, $w\in\tilde{H}$ such that $tp(v/G)$ is a non-forking extension of $p$, $tp(w/G)$ is a non-forking extension of $q$ and $\f_{P^\perp_{acl(G)}w_e}\leq\f_{P^\perp_{acl(G)}v_e}$.
\end{theo}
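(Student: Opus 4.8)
The plan is to reduce the statement over $G$ to the case over the empty set already established in Lemma \ref{dominationoverempty}, exactly in the spirit of how Theorem \ref{orthogonalityoverA} was deduced from Lemma \ref{orthogonalityoverempty}. The domination relation $\vartriangleright_G$ is, by definition, about non-forking extensions to supersets of $G$, so the natural first move is to pass to the non-forking extensions $p^\prime,q^\prime\in S_1(G)$ of $p$ and $q$. By stationarity these extensions are unique, so $p\vartriangleright_G q$ is equivalent to $p^\prime\vartriangleright_G q^\prime$, and it suffices to work with a single pair of types over $G$.

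First I would recall from the forking analysis (Theorem \ref{nonforkingextension} and the remark that $P^\perp_{acl(E)}v=(P^\perp_Ev)_e$) that independence over $G$ is controlled entirely by the components $P^\perp_{acl(G)}v_e$. The key observation is that domination over $G$ should be detected by these "new" essential components: for $v\models p^\prime$ and $w\models q^\prime$ with $v\ind^*_G$-representing the independent part, the relation $v\ind^*_G X \Rightarrow w\ind^*_G X$ translates, via Definition \ref{AindependentB} and Remark \ref{independenceoverA}, into the implication $P_{acl(G\cup X)}(P^\perp_{acl(G)}v_e)=0 \Rightarrow P_{acl(G\cup X)}(P^\perp_{acl(G)}w_e)=0$. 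This is precisely the empty-set domination condition of Lemma \ref{dominationoverempty}, but now applied to the shifted vectors $P^\perp_{acl(G)}v_e$ and $P^\perp_{acl(G)}w_e$ inside the essential part orthogonal to $acl(G)$.

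Concretely, I would argue that working over $G$ is the same as working over $\emptyset$ in the quotient representation obtained by projecting orthogonally away from $acl(G)$: the subrepresentation generated by $P^\perp_{acl(G)}v_e$ plays the role that $H_{v_e}$ played in Lemma \ref{dominationoverempty}. Applying that lemma verbatim in this relativized setting yields $p^\prime\vartriangleright_G q^\prime$ if and only if $\f_{P^\perp_{acl(G)}w_e}\leq\f_{P^\perp_{acl(G)}v_e}$, which combined with the existence of the non-forking extensions gives exactly the asserted equivalence. The direction producing the witnesses $v,w$ uses Theorem \ref{HvinHw} to realize the domination geometrically: $\f_{P^\perp_{acl(G)}w_e}\leq\f_{P^\perp_{acl(G)}v_e}$ means $H_{P^\perp_{acl(G)}w_e}$ embeds as a subrepresentation of $H_{P^\perp_{acl(G)}v_e}$, and one places $w$ inside $H_v$ accordingly.

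The main obstacle I expect is the careful bookkeeping of parameters: one must verify that relativizing to $acl(G)$ genuinely reduces to the empty-set case, i.e.\ that the non-forking extension freely adjoins a copy of the essential part orthogonal to $acl(G)$ without interference from $G$ itself, and that the domination testing sets $X$ can be taken inside this orthogonal complement without loss. This amounts to checking that the projections $P^\perp_{acl(G)}$ commute appropriately with passage to $H_e$ and with the independence calculus, so that the implication on independence statements over $G$ is faithfully captured by the corresponding implication on projections of the shifted essential vectors. Once this reduction is justified, the remainder is a direct citation of Lemma \ref{dominationoverempty} and Theorem \ref{HvinHw}.
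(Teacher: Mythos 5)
Your proposal is correct and follows essentially the same route as the paper, which simply deduces the theorem from Lemma \ref{dominationoverempty} by relativizing to $acl(G)$ via the shifted vectors $P^\perp_{acl(G)}v_e$ and $P^\perp_{acl(G)}w_e$; the paper's own proof is just the one-line citation of that lemma, and your write-up supplies the reduction it leaves implicit. The bookkeeping issues you flag are real but are exactly the ones the paper also glosses over, so there is nothing further to change.
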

\begin{proof}
Clear by Lemma \ref{dominationoverempty}.
\end{proof}

\addcontentsline{toc}{chapter}{Bibliography}

\end{document}